\newtheorem{thm}{Theorem}[section]
\newtheorem{lemma}[thm]{Lemma}
\newtheorem{theorem}[thm]{Theorem}
\newtheorem{cor}[thm]{Corollary}
\newtheorem{remark}[thm]{Remark}
\newtheorem{de}[thm]{Definition}
\newtheorem{example}[thm]{Example}
\newcommand{\philipp}[1]{\textcolor{red}{#1}}
\newcommand{\EKR}{\mathcal{F}}
\title{On the Erd\H{o}s-Ko-Rado problem of flags with type $\{1, n-3 \}$ of finite sets}
\author{Philipp Heering\\ 
\ \\
Justus-Liebig Universität Gießen,\\
Mathematisches Institut, Arndtstraße 2,\\
 D-35392 Gießen, Germany\\philipp.heering@math.uni-giessen.de}
\date{2025}
\begin{document}

\maketitle

\begin{center}
\section*{Abstract}
\end{center}

A flag of a finite set $S$ is a set $f$ of non-empty, proper subsets of $S$, such that $X\subseteq Y$ or $Y\subseteq X$ for all $X,Y\in f$. Two flags $f_1$ and $f_2$ of $S$ are opposite if $X_1\cap X_2=\emptyset$, or $X_1\cup X_2=S$ for all $X_1\in f_1$ and $X_2\in f_2$. The set $\{|X| \mid X\in f \}$ is the type of a flag $f$. A set of pairwise non-opposite flags is an Erd\H{o}s-Ko-Rado set.
In 2022 Metsch posed the problem of determining the maximum size of all Erd\H{o}s-Ko-Rado sets of flags of type $T$ with $|T|=2$. We contribute towards this by determining the maximum size for flags of type $\{ 1,n-3\}$ for finite sets with $n$ elements. 
Furthermore we answer an open questions of Metsch regarding a small case.

 \textbf{Keywords:} Erd\H{o}s-Ko-Rado sets, Kneser graphs, independence number, shifting \\
 \textbf{MDC (202):} 
 05C69, 
 05D05, 
 05C35 

\section{Introduction}

The Erd\H{o}s-Ko-Rado (EKR) theorem \cite{OriginalEKR}, published in 1961, is a fundamental result in extremal set theory. It states that for a finite set $S$ of size $n$, the maximum intersecting family of $k$-subsets of $S$, where $2k \leq n$, has size ${n-1 \choose k-1}$. Since then, numerous generalizations and extensions of the problem have been studied across countless combinatorial structures \cite{EKR_restricted, ChrisandKaren, Karen_EKR_permutations}. One natural way to generalize results on sets is to consider vector spaces over finite fields, where $k$-subsets become subspaces of dimension $k$. Indeed it was shown in \cite{ekr_vectorspaces, hsieh} that sets behave like the limit case of the vector space analogue.  

More recently, a lot of interest has developed around EKR theorems for flags of spherical Tits buildings \cite{debeule2025largestsetsnonoppositechambers, AlgebraicApproach, IMM18, EKR_polarspace_generators} 
  Here, the natural notion of non-intersecting is opposition. For spherical Tits buildings of type $A_n$ the problem can be describe as follows.

Let $\mathbb{F}_q$ be the finite field of order $q$ and consider the vector space $\mathbb{F}_q^n$. A \emph{flag} $f$ of $\mathbb{F}_q^n$ is a set of subspaces, such that $X\leq Y$, or $Y\leq X$ for all $X,Y\in f$ and the set $\{\dim(X) \mid X\in f \}$ is called the \emph{type} of $f$. Two subspaces of $\mathbb{F}_q^n$ are \emph{opposite} if they intersect trivially or if they span the entire vector space. Two flags $f_1$ and $f_2$ of $\mathbb{F}_q^n$ are called \emph{opposite} if and only if all elements of $f_1$ are opposite to all elements of $f_2$. By $\Gamma_q(n,T)$ we denote the graph, whose vertices are the flags of type $T$ of $\mathbb{F}_q^n$, two vertices are adjacent if and only if they are opposite as flags. Maximal independent sets of $\Gamma_q(n,T)$ have been studied extensively, we give some references in Table \ref{Citations}.

\begin{table}[h]
\begin{center}
\begin{tabular}{|c c|}
 \hline
Graph & Reference \\ [0.5ex]
 \hline\hline
 $\Gamma_q(n,\{1,\ldots,n-1\})$ & \cite{AlgebraicApproach, klaus_jesse_philipp}  \\ 
 \hline\hline
  $\Gamma_q(n,\{1,n-1\})$ & \cite{pointhyperplaneflags}  \\ 
 \hline\hline
  $\Gamma_q(5,\{1,2,3,4 \})$ & \cite{heering_PG4q}  \\ 
 \hline\hline
 $\Gamma_q(4,\{1,2,3 \})$ & \cite{heering_metsch_secondmax_PG3q, MetschPG3q}  \\ 
 \hline\hline
$\Gamma_q(5,\{1,4 \})$ & \cite{EKRpointplaneflags4d}  \\ 
 \hline\hline
 $\Gamma_q(5,\{2,4 \})$ & \cite{cocliquesonlineplanein4d, DHAESELEER2022103474}   \\ 
 \hline
 $\Gamma_q(7,\{3,4 \})$ & \cite{metsch_werner_plane_solid_PG6q}   \\
 \hline
\end{tabular}
\end{center}
\caption{\label{Citations} References for $\Gamma_q(n,T)$}
\end{table}

Subsequently, the EKR problem for flags of finite sets, which can be seen as thin buildings, was introduced in \cite{EKR_finite_sets_Metsch}.
Let $S$ be a finite set with $n$ elements, w.l.o.g. we assume $S=[n]:=\{1,\ldots,n\}$. Two subsets $X$ and $Y$ of $[n]$ are called \emph{opposite} if and only if the union of $X$ and $Y$ is $[n]$, or if the meet of $X$ and $Y$ is empty. This means that they are opposite if and only if their intersection is as small as possible. 
A \emph{flag} $f$ of $[n]$ is a set of subsets of $[n]$, such that $X\subseteq Y$, or $Y\subseteq X$ for all $X,Y\in f$ and the set $\{ |X| \mid X\in f \}$ is called the \emph{type} of $f$. 
Two flags $f_1$ and $f_2$ of $[n]$ are called \emph{opposite} if and only if all elements of $f_1$ are opposite to all elements of $f_2$. 
By $\Gamma(n,T)$ we denote the graph, whose vertices are the flags of type $T$ of $[n]$, two vertices are adjacent if and only if they are opposite as flags. If $|T|=1$, then $\Gamma(n,T)$ is a Kneser graph. 

In contrast to the $q$-analogue situation, little research has been conducted; in fact, as far as the author is aware, \cite{EKR_finite_sets_Metsch} is the only atricle in which independent sets of $\Gamma(n,T)$ are studied.

In \cite{EKR_finite_sets_Metsch}, Metsch posed the problem of determining the independence number of $\Gamma(n,T)$ for all cases with $|T|=2$. 
 If $T=\{a,b\}$ with $a< b$, we denote $\Gamma(n,T)$ as $\Gamma(n,a,b)$
 and the independence number as $\alpha(\Gamma(n,a,b))$. 
The independence number of $\Gamma(n,a,b)$ is known if $a+b=n$, or $a,b\leq n/2$, furthermore the graphs $\Gamma(n,a,b)$ and $\Gamma(n,n-b,n-a)$ are isomorphic \cite{EKR_finite_sets_Metsch}. Some of the remaining non-trivial cases are also settled.

\begin{thm} \cite{EKR_finite_sets_Metsch}
	For $n<2b$ and $a+3b\leq 2n$ we have $\alpha(\Gamma(n,a,b))={n-1\choose b}{b\choose a}$.
\end{thm}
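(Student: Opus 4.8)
\emph{Proof plan.} The strategy is to pin down when two flags are opposite in the given range, reduce to a compressed EKR set by shifting, decompose over the $b$-parts of the flags, and reduce the whole problem to a covering estimate for an intersecting family of $(n-b)$-sets. From $a+3b\le 2n$ we have $a\le 2n-3b$, and together with $n<2b$ this gives $a<n/2$ and $a+b\le n-1$. Consequently, for flags $f=(A,B)$ and $f'=(A',B')$ of type $\{a,b\}$, none of $A\cup A'$, $A\cup B'$, $B\cup A'$ can equal $[n]$, so $f$ and $f'$ are opposite precisely when $B\cup B'=[n]$, $A\cap B'=\emptyset$ and $A'\cap B=\emptyset$; writing $\overline B=[n]\setminus B$, an $(n-b)$-set, this says $\overline B\cap\overline{B'}=\emptyset$, $A\subseteq\overline{B'}$ and $A'\subseteq\overline B$. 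Two facts to note at once: opposite flags have disjoint $A$-parts; and if the sets $\overline B$ occurring in $\mathcal F$ pairwise intersect then $\mathcal F$ is automatically an EKR set, so since $2(n-b)\le n-1$ the Erd\H{o}s--Ko--Rado theorem caps such $\mathcal F$ at $\binom{n-1}{n-b-1}\binom{b}{a}=\binom{n-1}{b}\binom{b}{a}$, a value attained by $\{(A,B):n\notin B\}$. The content of the theorem is that EKR sets using opposite-free pairs with $\overline B\cap\overline{B'}=\emptyset$ cannot beat this.

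For the reduction by shifting, for $i<j$ let $S_{ij}$ replace $j$ by $i$ in both components of a flag wherever possible; this sends flags to flags, and from the criterion above a routine case check shows $S_{ij}$ never creates an opposite pair --- a union $B\cup B'$ missing an element still misses one after shifting (a shift can only bring the element $i$ into a union, never fill the last gap), and a non-empty $A\cap B'$ remains non-empty. Passing to the associated compression (which keeps a flag whose shifted image is already in $\mathcal F$) preserves $|\mathcal F|$ and the EKR property, so we may assume $\mathcal F$ is left-compressed.

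Now decompose over the $b$-part: for a $b$-set $B$ set $\mathcal F_B=\{A:(A,B)\in\mathcal F\}\subseteq\binom Ba$ and $w(B)=|\mathcal F_B|$, so $|\mathcal F|=\sum_B w(B)$. If $B\cup B'=[n]$ then $\overline{B'}$ is an $(n-b)$-subset of $B$, and if $\mathcal F_B$ met $\binom{\overline{B'}}{a}$ while $\mathcal F_{B'}$ met $\binom{\overline B}{a}$ we would get an opposite pair; hence $\min(w(B),w(B'))\le\binom ba-\binom{n-b}{a}$. Call $B$ \emph{fat} if $w(B)>\binom ba-\binom{n-b}{a}$. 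Then no two fat $b$-sets form an opposite edge, so $\{\overline B:B\text{ fat}\}$ is an intersecting family of $(n-b)$-sets and there are at most $\binom{n-1}{b}$ fat sets; moreover a fat $B$ meets $\binom Ca$ for \emph{every} $C\in\binom{B}{n-b}$, which forces $\mathcal F_{B'}\cap\binom{\overline B}{a}=\emptyset$ whenever $B$ is fat, $B'$ is not, and $B\cup B'=[n]$. With $U(B')=\bigcup_{B\text{ fat},\ B\cup B'=[n]}\binom{\overline B}{a}\subseteq\binom{B'}{a}$ we get $w(B')\le\binom ba-|U(B')|$ for non-fat $B'$, whence
\[
|\mathcal F|\ \le\ \binom nb\binom ba\ -\ \sum_{B'\ \text{non-fat}}|U(B')| ,
\]
so the theorem is equivalent to the covering estimate $\sum_{B'\ \text{non-fat}}|U(B')|\ge\binom{n-1}{b-1}\binom ba$.

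Proving this covering estimate is the heart of the argument and is where $a+3b\le 2n$ is genuinely used. In the ideal case where the fat sets are exactly the $b$-sets missing a fixed point $p$, every non-fat $B'$ contains $p$, and --- using $a\le n-b-1$ and $n-b\le b$, both forced by the hypotheses --- each $A\in\binom{B'}{a}$ extends to some $C\in\binom{B'}{n-b}$ with $p\in C$; then $C=\overline B$ for a fat $B$, so $A\in U(B')$ and $U(B')=\binom{B'}{a}$ for each of the $\binom{n-1}{b-1}$ non-fat sets, giving equality. The general case has to control the intersecting family $\{\overline B:B\text{ fat}\}$ --- its size (Erd\H{o}s--Ko--Rado, and Hilton--Milner when it is not contained in a star) and how densely complements of fat sets can sit inside an arbitrary $b$-set (a Kruskal--Katona-type shadow count) --- and play this against the crude inequality $|\mathcal F|\le F\binom ba+(\binom nb-F)(\binom ba-\binom{n-b}{a})$, where $F$ is the number of fat sets. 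Beyond furnishing $a\le n-b-1$ and (with $a\ge 1$) $3(n-b)>n$, which makes the Kneser graph on the $(n-b)$-sets triangle-free, the hypothesis $a+3b\le 2n$ is exactly what keeps the binomial ratio $\binom ba/\binom{n-b}{a}$ small enough for these estimates to combine to $\binom{n-1}{b}\binom ba$. Making the counts match across all sizes and shapes of the fat family is the delicate point, and the main obstacle.
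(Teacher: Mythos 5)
There is a genuine gap, and it sits exactly where you say the ``heart of the argument'' is. Your reduction to the covering estimate $\sum_{B'\text{ non-fat}}|U(B')|\ge\binom{n-1}{b-1}\binom ba$ is only proved in the ideal case where the fat family is the star of $b$-sets missing a fixed point; the general case is described (``Erd\H{o}s--Ko--Rado, Hilton--Milner, a Kruskal--Katona-type shadow count, played against the crude inequality'') but not carried out, and it is precisely this case that constitutes the theorem. Moreover, the claim that the theorem is \emph{equivalent} to the covering estimate is false: the estimate can simply fail. If there are no fat $b$-sets at all, then $U(B')=\emptyset$ for every $B'$ and the sum is $0$, so your framework falls back on the crude bound $|\mathcal F|\le\binom nb\bigl(\binom ba-\binom{n-b}{a}\bigr)$, and this does not suffice throughout the admissible range. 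For $a=1$ the crude bound is at most $\binom{n-1}{b}\,b$ only when $b^2+bn-n^2\le 0$, i.e.\ $b\lesssim 0.618\,n$, whereas the hypothesis $a+3b\le 2n$ allows $b$ up to roughly $2n/3$ (e.g.\ $n=30$, $b=19$, $a=1$ satisfies $n<2b$ and $a+3b\le 2n$, but $\binom{30}{19}\cdot 8>\binom{29}{19}\cdot 19$). So even the ``no fat sets'' configuration is not closed by the tools you assemble, and additional ideas are genuinely required, not just bookkeeping.

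For comparison: the paper does not prove this statement at all; it is quoted from Metsch's article on EKR sets of flags of finite sets, and the proof there uses a variation of Katona's cycle method rather than shifting plus a weight decomposition over the $b$-parts. Your preliminary steps (the opposition criterion under $a<n/2<b$, $a+b<n$, the shifting argument, the weight-gap observation that $B\cup B'=[n]$ forces $\min(w(B),w(B'))\le\binom ba-\binom{n-b}{a}$, and the fact that the complements of fat sets form an intersecting family) are all sound and in the spirit of the weight machinery the paper develops for the case $(a,b)=(1,n-3)$, but as a proof of the cited theorem the proposal is incomplete.
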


This was shown using a variation of the cycle method. Furthermore, using induction on $n$, the following result was obtained.

\begin{thm} \cite{EKR_finite_sets_Metsch} \label{T: metsch 1,n-2}
	For $n\geq 5$, we have $\alpha(\Gamma(n,1,n-2))={n\choose 3}+2$.
\end{thm}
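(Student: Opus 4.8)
I would first translate the problem into a statement about $3$-subsets carrying a marked point. A flag of type $\{1,n-2\}$ of $[n]$ is a set $\{\{x\},Y\}$ with $x\in Y$ and $|Y|=n-2$; writing $C=[n]\setminus Y$ and $T=C\cup\{x\}$, this is the same datum as a pair $(T,x)$ with $x\in T\in\binom{[n]}{3}$. For $n\geq 5$ any two $(n-2)$-subsets of $[n]$ intersect, so opposition of the $Y$-parts means $Y_1\cup Y_2=[n]$, i.e.\ $C_1\cap C_2=\emptyset$; feeding this together with the conditions on the point-parts into the definition one finds that $(T_1,x_1)$ and $(T_2,x_2)$ are opposite exactly when $T_1\cap T_2=\{x_1,x_2\}$. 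In particular, two flags sharing their $3$-set or their marked point are never opposite, two flags whose $3$-sets meet in at most one point are never opposite, and in an opposite pair the intersection $T_1\cap T_2$ has size $2$ and contains both marked points.

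For the lower bound I would fix the usual order on $[n]$ and take $\EKR=\{(T,\max T):T\in\binom{[n]}{3}\}\cup\{(\{1,2,3\},1),(\{1,2,3\},2)\}$. Two flags $(T_1,\max T_1)$, $(T_2,\max T_2)$ of the first part cannot be opposite, since that would force $\max T_1$ and $\max T_2$ to be distinct members of $T_1\cap T_2$, whereas $\max T_2\in T_1$ yields $\max T_2\leq\max T_1$ and symmetrically, so they coincide. Using the opposition description one checks that neither extra flag is opposite to a flag of the first part (such an opposition would need a $3$-set whose maximum lies in $\{1,2,3\}$ meeting $\{1,2,3\}$ in exactly two of its points, which is impossible), and the two extra flags are not opposite to each other as they share $\{1,2,3\}$. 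Since the first part contains exactly one flag per $3$-set, $|\EKR|=\binom{n}{3}+2$, so the claimed bound is attained.

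For the upper bound I would induct on $n$, the base of the induction being the range $5\leq n\leq 9$. Let $\EKR$ be an Erd\H{o}s-Ko-Rado set of $\Gamma(n,1,n-2)$ with $n\geq 6$. For $p\in[n]$ let $\EKR_0(p)=\{(T,x)\in\EKR:p\notin T\}$ and $d_1(p)=|\EKR|-|\EKR_0(p)|$, the number of flags of $\EKR$ through $p$. Deleting $p$ turns a flag $(T,x)$ with $p\notin T$ into a flag of type $\{1,(n-1)-2\}$ of $[n]\setminus\{p\}$, and by the opposition description the adjacencies among these flags are unchanged (here $n-1\geq 5$ is used); so $\EKR_0(p)$ is an Erd\H{o}s-Ko-Rado set of $\Gamma(n-1,1,(n-1)-2)$, whence $|\EKR_0(p)|\leq\binom{n-1}{3}+2$ by induction and $d_1(p)\geq|\EKR|-\binom{n-1}{3}-2$ for every $p$. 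Summing over all $p$ and using $\sum_{p}d_1(p)=\sum_{(T,x)\in\EKR}|T|=3|\EKR|$ gives $3|\EKR|\geq n\bigl(|\EKR|-\binom{n-1}{3}-2\bigr)$, hence $|\EKR|\leq\frac{n}{n-3}\bigl(\binom{n-1}{3}+2\bigr)=\binom{n}{3}+\frac{2n}{n-3}$, which is below $\binom{n}{3}+3$ as soon as $n\geq 10$. This proves $\alpha(\Gamma(n,1,n-2))=\binom{n}{3}+2$ for $n\geq 10$; the base cases $5\leq n\leq 9$ are treated separately, $n=5$ by a direct finite check ($\Gamma(5,1,3)$ has $30$ vertices).

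I expect the main difficulty to be the cases $6\leq n\leq 9$, where the averaging above is off by a bounded additive amount: one has to replace the black-box bound $|\EKR_0(p)|\leq\binom{n-1}{3}+2$ by finer information. That bound is tight only when $\EKR_0(p)$ is near-extremal, and it seems hard for many of the families $\EKR_0(p)$ to be near-extremal at once while keeping $|\EKR|$ large — a single $3$-set carrying two flags of $\EKR$ already forces of order $n$ other flags out of $\EKR$. The clean route is presumably to strengthen the inductive hypothesis to a stability statement, namely that every Erd\H{o}s-Ko-Rado set of $\Gamma(n,1,n-2)$ of size close to $\binom{n}{3}+2$ is, up to relabelling $[n]$, essentially the family $\EKR$ above; this supplies the leverage for the step and absorbs almost all of the small-$n$ analysis. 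Turning the informal bookkeeping (excess of $3$-sets carrying two flags equals deficit of $3$-sets carrying none, up to the additive $2$) into a rigorous argument is where the real work lies.
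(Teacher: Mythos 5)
First, note that the paper does not prove this statement at all: it is quoted from Metsch's article \cite{EKR_finite_sets_Metsch} (where, as the paper remarks, it is obtained by induction on $n$), so there is no in-paper proof to match your argument against line by line. Judged on its own terms, much of your proposal is sound: the re-encoding of a flag of type $\{1,n-2\}$ as a pair $(T,x)$ with $x\in T\in\binom{[n]}{3}$ is correct, the characterization of opposition as $T_1\cap T_2=\{x_1,x_2\}$ with $x_1\neq x_2$ follows from the paper's Lemma \ref{L: opposition}, the family $\{(T,\max T)\}\cup\{(\{1,2,3\},1),(\{1,2,3\},2)\}$ is indeed an independent set of size $\binom{n}{3}+2$, and the deletion/averaging step ($\sum_p d_1(p)=3|\mathcal{F}|$ combined with $|\mathcal{F}_0(p)|\leq\binom{n-1}{3}+2$) correctly yields $|\mathcal{F}|\leq\binom{n}{3}+\frac{2n}{n-3}$, which settles the induction step for $n\geq 10$.

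The genuine gap is the base of the induction. The theorem is asserted for all $n\geq 5$, and your step at $n=10$ already requires the case $n=9$; but for $6\leq n\leq 9$ your averaging only gives $\binom{n}{3}+3$ (or $+4$ at $n=6$), and you do not close this. You acknowledge that this is ``where the real work lies'' and propose strengthening the induction hypothesis to a stability statement, but that strengthening is neither formulated precisely nor proved, and even the $n=5$ case is asserted as ``a direct finite check'' without being carried out. So as written the proof establishes the lower bound for all $n\geq5$ and the upper bound only conditionally on the unproven range $5\leq n\leq 9$. (For what it is worth, the paper's own treatment of the analogous $\{1,n-3\}$ problem faces exactly this issue and resolves its small cases $n=9,10$ by computer, using the automorphism group $Sym(n)$ from \cite{pan_automorphgroupgeneralposgraph}; a similar computational verification of your base cases, or an actual proof of the stability statement, would be needed to make your argument complete.)
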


We remark that the induction used in the proof of Theorem \ref{T: metsch 1,n-2} also works in a more general setting, however the required basis is out of reach for computers. 

We introduce two tools that have not been considered in the framework of flags of finite sets. In the context of extremal set theory one technique that is used in many proofs is `shifting' \cite{frankl1987shifting, Frankl2020Simple}. We generalize this technique to flags of sets. \\
In countless areas of combinatorics a notion of `weight' is used at some point, sometimes explicitly \cite{weight-code-robin, Kurz2020AGO}
and sometimes implicitly \cite{cocliquesonlineplanein4d, k-spaces_in_k-2}. In the study of the independence number for Kneser graphs on flags of spherical Tits buildings a lot of proofs use some notion of weight extensively \cite{cocliquesonlineplanein4d, klaus_jesse_philipp, heering_metsch_secondmax_PG3q}, we also apply this notion to our framework.\\
Next, we state our main result.

\begin{theorem} \label{T: main theorem}
For $n\geq 9$, we have $\alpha(\Gamma(n,1,n-3))={n\choose 4}+42$.
\end{theorem}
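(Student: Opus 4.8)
The plan is to prove the two inequalities separately. For the lower bound $\alpha(\Gamma(n,1,n-3))\ge \binom{n}{4}+42$ I would exhibit an explicit independent set: the "point-pencil / hyperplane-pencil" type construction. A flag of type $\{1,n-3\}$ is a pair $(p,H)$ with $p\in H$, $|H|=n-3$, equivalently (by complementation) $p\in[n]$ and a $3$-set $[n]\setminus H$ avoiding $p$. Two such flags $(p,H)$, $(p',H')$ are opposite iff $p\notin H'$, $p'\notin H$, $H\cup H'=[n]$ (the condition $p\cap p'=\emptyset$ is automatic unless $p=p'$). Fixing a point $x$ and taking all flags $(x,H)$ with $x\in H$ gives a set of size $\binom{n-1}{3}$ that is independent because any two share the point $x$, which then lies in both hyperplanes so they cannot be opposite. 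To get the claimed bound one enlarges this: take all $(x,H)$ through a fixed point $x$ together with a carefully chosen bounded "extra" family on the remaining structure; the constant $42$ (note $42=\binom{7}{2}+\binom{7}{1}=21+7+14$, or more likely $42$ arises as a small exceptional configuration count analogous to the $+2$ in Theorem~\ref{T: metsch 1,n-2}) should come from a sporadic independent set supported on a small subset of coordinates. I would identify this sporadic part by the same kind of analysis Metsch used for the $+2$ in the $\{1,n-2\}$ case, and verify $\binom{n-1}{3}+\binom{n-1}{2}\cdot(\text{something})+\ldots$ assembles to $\binom{n}{4}+42$ via Pascal's rule.

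**Upper bound strategy.** For the upper bound I would use the two new tools advertised in the introduction: shifting and weights. First, apply the generalized shifting operation to reduce to a shifted (compressed) independent set $\EKR$ without decreasing its size; shifting should make the extremal structure "point-pencil-like" and kill most sporadic possibilities, leaving only a controlled list. Then assign to each flag a weight and bound $\sum_{f\in\EKR} w(f)$ from above by a global counting/double-counting argument over all $(n-3)$-sets or over all maximal flags, while bounding it from below by $(\min w)\cdot|\EKR|$; choosing the weight so that the "generic" point-pencil flags are cheap and any deviation is expensive forces $|\EKR|$ to be small. Alternatively, and perhaps more robustly, I would run induction on $n$ in the spirit of Theorem~\ref{T: metsch 1,n-2}: delete a coordinate $i$, split $\EKR$ according to how its members meet $\{i\}$ and $[n]\setminus\{i\}$, relate the pieces to independent sets of $\Gamma(n-1,1,n-4)$, $\Gamma(n-1,1,n-3)$ (degenerate), and intersecting families of small sets, then use the inductive bound plus the shifting normalization to control the cross terms.

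**Main obstacle.** The hard part will be the base case and the precise bookkeeping of the additive constant. Unlike the $\{1,n-2\}$ situation where the induction "required basis is out of reach for computers," here the constant jumps from $2$ to $42$, so the sporadic extremal configurations are genuinely richer; I expect the real work is classifying all shifted independent sets that beat the naive point-pencil bound on a small number of coordinates (likely $n=7$ or $n=8$ sized configurations), showing the best such has exactly $42$ "excess" flags, and proving the induction step never allows stacking two sporadic gains. Making the shifting argument actually preserve independence for flags — i.e.\ checking that the compression of a non-opposite pair stays non-opposite — is a delicate but routine verification I would do first, since everything downstream depends on it. I would also need the hypothesis $n\ge 9$ precisely to ensure the inductive bound $\binom{n-1}{4}+42$ dominates all cross terms with room to spare, and I would track where $n\ge 9$ (rather than $n\ge 8$ or $n\ge 7$) becomes essential, flagging the small cases for separate (possibly computer-assisted) treatment.
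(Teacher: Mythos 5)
Your outline reproduces the paper's general flavour (shifting, weights, induction, computer help for small cases), but both halves have genuine gaps. For the lower bound, the construction you propose cannot reach the target: the point-pencil of all flags $(x,H)$ with $x\in H$ has size $\binom{n-1}{3}=\Theta(n^3)$, while the claimed value $\binom{n}{4}+42$ is $\Theta(n^4)$, so the ``extra'' family you would have to add has size $\binom{n-1}{4}+42$ and is emphatically not a bounded sporadic configuration supported on few coordinates. The paper instead takes the lower bound directly from Metsch's families: $\EKR_{n-9}(n,1,n-3)$ (flags $(A,B)$ with $[n-9]\subseteq B\subseteq[n-1]$, or $\min(A)\leq n-9$ and $[\min(A)]\subseteq B$) has exactly $\binom{n}{4}+42$ elements for $n\geq 9$ (Remark \ref{E: example s=3}). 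Your numerology for $42$ is also off ($\binom{7}{2}+\binom{7}{1}=28$); in the paper the constant enters only through the computed values $\alpha(\Gamma(9,1,6))=168=\binom{9}{4}+42$ and $\alpha(\Gamma(10,1,7))=252=\binom{10}{4}+42$, and conjecturally equals $\tfrac13\binom{9}{4}$.

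For the upper bound, the decisive step is missing from your plan. The paper's proof hinges on Lemma \ref{L: key for the proof}: for $n\geq 11$, in a maximum left-shifted independent set $\EKR$ the singleton $\{1\}$ has full weight $\binom{n-1}{3}$, i.e.\ every flag $(A,B)\in\EKR$ has $1\in B$. Only with this structural fact does the clean recursion $\alpha(\Gamma(n,1,n-3))\leq\binom{n-1}{3}+\alpha(\Gamma(n-1,1,n-4))$ follow, and Pascal's rule closes the induction. Proving that lemma is the bulk of the work: one compares the set $\EKR_1$ of missing flags with $A=\{1\}$ against the set $\EKR_2$ of flags with $1\notin B$, uses the weight-gap lemmas (Lemmas \ref{L: weight B gap} and \ref{L: weight-A}), the shifting normalization (Lemma \ref{L:ABproperties}), the classical EKR theorem to show the flags of $\EKR_2$ involve at most $n-2$ distinct $(n-3)$-sets, and a three-case analysis on how many of those sets have maximal weight, each case ending in a numerical contradiction for $n\geq 11$. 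Your proposed ``global double-counting with a cleverly chosen weight'' is only a hope for this; nothing in the proposal forces the point-pencil structure. You also misplace the thresholds: the key lemma genuinely fails for $n=9,10$ (there are maximum left-shifted sets in which $\{1\}$ does not have full weight), so the induction starts at $n=11$ and the base cases are $n=9,10$, settled by computer using the automorphism group result — not sporadic configurations at $n=7,8$, and $n\geq 9$ is not where the inductive step becomes valid.
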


In Section \ref{S: examples} we give two examples of independent sets for $n\geq 9$ that have size ${n\choose 4}+42$. The values for $n<9$ are given in Section \ref{S: computer results}.

Note that our proof involves an induction, where the cases $n=9,10$ are dealt with by the computer. These computer results rely on a result of  \cite{pan_automorphgroupgeneralposgraph} on the automorphism group of $\Gamma(n,a,b)$. 
Using this, Section \ref{S: computer results} also provides an answers for a question of Metsch concerning small $n$.

The expressions for the independence number of $\Gamma(n,1,n-3)$ and $\Gamma(n,1,n-2)$ are quite similar in nature. Even though the value $42$ is not without cultural significance, the reason for the appearance of this number is not immediately apparent from our proof. Based on some calculations and the work of \cite{EKR_finite_sets_Metsch}, we conjecture for general $s\geq 2$ that $\Gamma(n,1,n-s)$ has independence number ${n\choose s+1}+ \frac{1}{s} {s^2\choose s+1}$ if $n$ is large enough compared to $s$.

\section{Preliminaries}

For the rest of the paper let $a$, $b$ and $n$ be natural numbers with $a+b<n$ and $a< \frac{n}{2}< b$. We denote the vertices of $\Gamma(n,a,b)$ as pairs $(A,B)$ with $|A|=a$ and $|B|=b$. By flags of $\Gamma(n,a,b)$, we also denote the vertices of  $\Gamma(n,a,b)$.

\begin{lemma} \label{L: opposition}
Let $(A_1,B_1)$ and $(A_2,B_2)$ be two vertices of $\Gamma(n,a,b)$.  
Then $(A_1,B_1)$ and $(A_2,B_2)$ are adjacent and hence opposite as flags if and only if the following conditions hold:
\begin{itemize}
	\item $B_1$ and $B_2$ are opposite, i.e. $B_1\cup B_2=[n]$.
	\item $A_1$ and $B_2$ are opposite, i.e. $A_1\cap B_2=\emptyset$.
	\item $A_2$ and $B_1$ are opposite, i.e. $A_2\cap B_1=\emptyset$.
\end{itemize}
\end{lemma}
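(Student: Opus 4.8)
The plan is to reduce the four a priori pairwise opposition conditions that define adjacency in $\Gamma(n,a,b)$ to the three stated ones, exploiting the cardinality constraints $a<n/2<b$ and $a+b<n$ together with the nesting $A_i\subseteq B_i$ that is built into a flag of type $\{a,b\}$.

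First I would record how ``opposite'' specializes for pairs of subsets of the relevant sizes. For $B_1$ and $B_2$: since $|B_1\cap B_2|\ge 2b-n>0$, they cannot meet trivially, so $B_1$ and $B_2$ are opposite if and only if $B_1\cup B_2=[n]$. For $A_1$ and $B_2$ (and symmetrically for $A_2$ and $B_1$): since $|A_1\cup B_2|\le a+b<n$, their union cannot be all of $[n]$, so they are opposite if and only if $A_1\cap B_2=\emptyset$. For $A_1$ and $A_2$: since $|A_1\cup A_2|\le 2a<n$, they are opposite if and only if $A_1\cap A_2=\emptyset$.

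For the forward direction, if $(A_1,B_1)$ and $(A_2,B_2)$ are adjacent then by definition every element of $\{A_1,B_1\}$ is opposite to every element of $\{A_2,B_2\}$; in particular the pairs $\{B_1,B_2\}$, $\{A_1,B_2\}$, $\{A_2,B_1\}$ are opposite, and the translations above turn this into exactly the three displayed conditions. For the converse, assume the three displayed conditions. Then the pairs $\{B_1,B_2\}$, $\{A_1,B_2\}$, $\{A_2,B_1\}$ are opposite by the same translations, so it only remains to check that $A_1$ and $A_2$ are opposite. Here I would use $A_1\subseteq B_1$ (which holds since $\{A_1,B_1\}$ is a flag and $a<b$), giving $A_1\cap A_2\subseteq B_1\cap A_2=\emptyset$ by the third condition; as $2a<n$, this empty intersection is precisely opposition. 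Hence all four pairs are opposite and the two vertices are adjacent.

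There is no genuine obstacle in this argument; the only subtlety worth flagging is that the three displayed conditions do not mention the pair $\{A_1,A_2\}$ at all, and the actual content of the lemma is that opposition of this pair is forced automatically by the nesting $A_1\subseteq B_1$ (or symmetrically $A_2\subseteq B_2$) combined with $A_2\cap B_1=\emptyset$.
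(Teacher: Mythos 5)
Your proposal is correct and follows the same route the paper intends: the paper's proof is only the one-line remark that the lemma ``follows immediately from the definition of opposite and the conditions $\frac{n}{2}<b$ and $a+b<n$'', and your argument simply spells out that implicit reasoning, including the one genuinely non-obvious point that opposition of $A_1$ and $A_2$ is forced by $A_1\subseteq B_1$ together with $A_2\cap B_1=\emptyset$. No discrepancies; your write-up is a fully detailed version of the paper's proof.
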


\begin{proof}
	This follows immediately from the definition of opposite and the conditions $\frac{n}{2}<b$ and $a+b<n$.
\end{proof}

We  study independent sets of $\Gamma(n,a,b)$, i.e. sets of flags that are pairwise non-opposite. Two flags are non-opposite if they do not satisfy at least one condition    stated in Lemma \ref{L: opposition}. 

\begin{remark} \label{R: intersection}
	Let $(A_1,B_1)$ and $(A_2,B_2)$ be non-opposite flags.
	Then either $|B_1\cap B_2|>2b-n$, or $|A_1\cap B_2|>0$, or $|A_2\cap B_1|>0$. In any case the intersection is large enough.\\
	Assume that $|B_1\cap B_2|>2b-n$. If $(A_3,B_3)$ is another flag, such that $|B_1\cap B_3|\geq|B_1\cap B_2|$, then by transitivity $(A_1,B_1)$ and $(A_3,B_3)$ are also non-opposite. For the other cases analogue statements hold.
\end{remark}

\subsection{Weights}

First, we assign a weight to the $b$-subsets of $[n]$.

\begin{de} Let $\EKR$ be an independent set of $\Gamma(n,a,b)$. For a subset $B\subseteq [n]$, with $|B|=b$, the number of flags in $\EKR$ that contain $B$ is called the \emph{$\EKR$-weight} of $B$ or, if there is no risk of confusion, simply \emph{weight} of $B$.
\end{de}
 
 Trivially we have that the weight of $B$ is $\leq { b\choose a }$, since there are ${b\choose a}$ flags in $\Gamma(n,a,b)$ that contain $B$. When assuming that the independent set in question is maximal, the number of possible weights decreases. We remark here that we distinguish between maximal independent sets and maximum independent sets, where maximum means largest.

\begin{lemma} \label{L: weight}
Let $\EKR$ be a maximal independent set of $\Gamma(n,a,b)$ and let $B\subset [n]$ with $|B|=b$. The weight of $B$ is ${b\choose a}$ if and only if, for every flag $(A',B')\in \EKR$, one of the following conditions holds.
	\begin{itemize}
		\item $B$ and $A'$ are non-opposite, i.e. $A'\cap B\neq \emptyset$.
		\item $B$ and $B'$ are non-opposite, i.e. $B\cup B'\neq [n]$.
	 \end{itemize}
\end{lemma}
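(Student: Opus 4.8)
The plan is to prove both implications by relating the weight of $B$ to the presence in $\EKR$ of all ${b\choose a}$ flags of the form $(A,B)$ with $A\subseteq B$ and $|A|=a$; note that the weight equals ${b\choose a}$ precisely when all of these flags lie in $\EKR$.

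For the ``if'' direction I would assume that every $(A',B')\in\EKR$ satisfies one of the two stated conditions and show that $\EKR\cup\{(A,B):A\subseteq B,\ |A|=a\}$ is again an independent set. Any two of the newly added flags $(A_1,B)$ and $(A_2,B)$ share the second coordinate, hence are non-opposite because $B\cup B\neq[n]$; so it suffices to test a flag $(A,B)$ against an arbitrary $(A',B')\in\EKR$. By Lemma \ref{L: opposition}, oppositeness would force both $A'\cap B=\emptyset$ and $B\cup B'=[n]$, and both are excluded by hypothesis. Thus the enlarged family is independent, and maximality of $\EKR$ forces all flags $(A,B)$ with $A\subseteq B$ to already lie in $\EKR$, so the weight of $B$ is ${b\choose a}$.

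For the ``only if'' direction I would assume the weight of $B$ is ${b\choose a}$, so all flags $(A,B)$ with $A\subseteq B$, $|A|=a$ belong to $\EKR$, and argue by contradiction. If some $(A',B')\in\EKR$ violated both conditions, then $A'\cap B=\emptyset$ and $B\cup B'=[n]$; inclusion--exclusion gives $|B\cap B'|=2b-n$, hence $|B\setminus B'|=b-(2b-n)=n-b$, and since $a+b<n$ we have $n-b>a$, so there is an $a$-subset $A\subseteq B\setminus B'$. Then $A\cap B'=\emptyset$, $A'\cap B=\emptyset$, and $B\cup B'=[n]$, so by Lemma \ref{L: opposition} the flags $(A,B)$ and $(A',B')$ are opposite, contradicting that both lie in the independent set $\EKR$.

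The argument is essentially bookkeeping against Lemma \ref{L: opposition}; the one point that must genuinely be checked, rather than merely unwound, is the inequality $n-b>a$, which provides the room to choose $A$ inside $B\setminus B'$ and is exactly the standing hypothesis $a+b<n$. I therefore do not anticipate a real obstacle — the main care needed is making the use of maximality in the ``if'' direction precise, i.e.\ verifying that adding the new flags creates no opposition, which is handled by the two cases above.
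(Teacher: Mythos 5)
Your proof is correct and follows essentially the same route as the paper: the ``only if'' direction constructs exactly the same witness $A\subseteq B\setminus B'$ (using $a+b<n$ and $B\cup B'=[n]$) to produce an opposite pair inside $\EKR$, and your ``if'' direction merely spells out, via the enlarged family and maximality, what the paper dismisses as ``it is clear''. No gaps.
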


\begin{proof}
	If all flags $(A',B')$ in $\EKR$ satisfy one of the two conditions it is clear that all flags $(A,B)$ with $A\subseteq B$ and $|A|=a$ are in $\EKR$.\\
	Now let us assume that $B$ has weight ${b\choose a}$. Let $(A,B)$ be a vertex of $\Gamma(n,a,b)$ and assume that there is a flag $(A',B')$ in $\EKR$, so that $A$ and $B'$ are non-opposite, but $B$ and $A'$ as well as $B$ and $B'$ are opposite. 
	Let $A_0$ be a subset of $B$ with $|A_0|=a$, such that $A_0\cap B'=\emptyset$. Such a set exists, since $a+b<n$ and $B\cup B'=[n]$. The flag $(A_0,B)$ cannot be in $M$, since it is opposite to $(A',B')$, but this is a contradiction as $B$ has weight ${b\choose a}$. This implies that no flag in $\EKR$ has the properties of $(A',B')$. 
\end{proof}

\begin{lemma} \label{L: weight B gap}
Let $\EKR$ be a maximal independent set of $\Gamma(n,a,b)$, 
and $B\subset [n]$ with $|B|=b$. 
If the weight of $B$ is less than ${b \choose a}$, it is at most ${b\choose a}-{n-b\choose a}$.
\end{lemma}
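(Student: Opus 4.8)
The plan is to show that if the weight of $B$ is strictly below ${b\choose a}$, then there must be a whole block of ${n-b\choose a}$ missing flags through $B$. Suppose the weight of $B$ is less than ${b\choose a}$. Then there is a vertex $(A,B)$ of $\Gamma(n,a,b)$ that is \emph{not} in $\EKR$. Since $\EKR$ is maximal, $(A,B)$ must be opposite to some flag $(A',B')\in\EKR$; by Lemma \ref{L: opposition} this means $B\cup B'=[n]$, $A\cap B'=\emptyset$, and $A'\cap B=\emptyset$. In particular $B$ and $B'$ are opposite and $A'$ and $B$ are opposite, so by the contrapositive of Lemma \ref{L: weight} this single flag $(A',B')$ is the witness of non-maximal weight — and crucially, $A\cap B'=\emptyset$ forces $A\subseteq B\setminus B'$.

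Next I would count how many $a$-subsets $A_0\subseteq B$ satisfy $A_0\cap B'=\emptyset$: these are exactly the $a$-subsets of $B\cap B'^c = B\setminus B'$. Since $B\cup B'=[n]$ we have $B^c\subseteq B'$, hence $B\setminus B' = B\cap B'^c$ has size $b - |B\cap B'|$; and $|B\cap B'| = |B| + |B'| - |B\cup B'| = 2b - n$, so $|B\setminus B'| = b-(2b-n) = n-b$. Therefore there are exactly ${n-b\choose a}$ such subsets $A_0$. For every one of them, the flag $(A_0,B)$ is opposite to $(A',B')$ (all three conditions of Lemma \ref{L: opposition} hold: $B\cup B'=[n]$, $A_0\cap B'=\emptyset$ by choice, and $A'\cap B=\emptyset$ already noted), hence $(A_0,B)\notin\EKR$. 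This exhibits ${n-b\choose a}$ distinct flags through $B$ that are absent from $\EKR$, so the weight of $B$ is at most ${b\choose a}-{n-b\choose a}$, as claimed.

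I do not anticipate a serious obstacle here; the argument is essentially a sharpened bookkeeping version of the proof of Lemma \ref{L: weight}. The one point to be careful about is making sure the witnessing flag $(A',B')$ really does rule out the \emph{entire} family of ${n-b\choose a}$ flags simultaneously — but this is immediate because the opposition conditions $B\cup B'=[n]$ and $A'\cap B=\emptyset$ do not depend on the choice of $A_0$, and the remaining condition $A_0\cap B'=\emptyset$ is built into the definition of the family. It is also worth noting in passing that the bound is only meaningful when $a\le n-b$, which holds under the standing hypotheses $a<n/2<b$ together with $a+b<n$; when $n-b<a$ the set $B\setminus B'$ is too small to contain any $a$-subset, but then no opposite witness $(A',B')$ with $A\subseteq B\setminus B'$ can exist at all, so the weight is automatically ${b\choose a}$ and there is nothing to prove.
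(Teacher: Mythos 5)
Your proof is correct and follows essentially the same route as the paper: it produces a witness flag $(A',B')\in\EKR$ with $B\cup B'=[n]$ and $A'\cap B=\emptyset$, and then counts the ${n-b\choose a}$ flags $(A_0,B)$ with $A_0\subseteq B\setminus B'$, all of which are opposite to the witness and hence missing from $\EKR$. The only cosmetic difference is that you obtain the witness directly from maximality applied to a single missing flag, whereas the paper cites Lemma \ref{L: weight}; the counting core is identical.
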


\begin{proof}
Let $B$ be a subset of weight $<{b \choose a}$. Then, 
by Lemma \ref{L: weight}, 
there is a flag $(A',B')$ in $\EKR$, with $B\cup B'=[n]$ and $A'\cap B=\emptyset$. 
As $B\cup B'=[n]$, we have $|B'\cap B|= 2b-n$.
For any flag $(A,B)$ that is in $\EKR$ we have that $A$ contains 
at least one of the $2b-n$ elements of $B'\cap B$. There are $b-(2b-n)=n-b$ elements in $B\setminus B'$, so any flag $(A,B)$ with $A\subseteq B\setminus B'$ is not in $\EKR$. There are ${n-b\choose a}$ such flags and the statement follows.
\end{proof}

For the case $(a,b)=(1,n-3)$ we thus have that an $(n-3)$-set has weight $n-3$ or at most $n-6$.

We can also assign weights to the $a$-subsets of $[n]$. 
\begin{de}
Let $\EKR$ be an independent set of $\Gamma(n,a,b)$. For $A\subseteq [n]$ with $|A|=a$,  the number of flags in $\EKR$ that contain $A$ is called the \emph{$\EKR$-weight} of $A$ or, if there is no risk of confusion, simply the \emph{weight} of $A$.
\end{de}


We prove an analogue of the Lemmata \ref{L: weight} and \ref{L: weight B gap} only in the special case $(a,b)=(1,n-3)$.

\begin{lemma} \label{L: weight-A}
Let $\EKR$ be a maximal independent set of $\Gamma(n,1,n-3)$, 
and $A\subseteq [n]$ with $|A|=1$. 
The subset $A$ has weight ${n-1 \choose 3}$ if and only if we have $A\subseteq B'$ for all flags $(A',B')$ of $\EKR$.
If the weight of $A$ is less than  
${n-1 \choose 3}$, it is at most ${n-1 \choose 3}-{n-4\choose 2}$. 
\end{lemma}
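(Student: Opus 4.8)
The plan is to mimic the structure of Lemmata \ref{L: weight} and \ref{L: weight B gap}, but now from the perspective of a point $A=\{x\}$ rather than a $b$-set. First I would characterize when $A$ has full weight ${n-1\choose 3}$. The flags of $\Gamma(n,1,n-3)$ containing $A$ are exactly the $(A,B)$ with $x\in B$, and there are ${n-1\choose 3}$ of them. By Lemma \ref{L: opposition}, such a flag $(A,B)$ is opposite to $(A',B')$ iff $B\cup B'=[n]$, $A\cap B'=\emptyset$ and $A'\cap B=\emptyset$. Since $x\in B$, the condition $A=\{x\}\subseteq B'$ already destroys opposition (it forces $A\cap B'\neq\emptyset$). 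So if $A\subseteq B'$ for every flag $(A',B')\in\EKR$, then every flag $(A,B)$ with $x\in B$ is non-opposite to all of $\EKR$ and, by maximality, lies in $\EKR$, giving weight ${n-1\choose 3}$. Conversely, suppose some flag $(A',B')\in\EKR$ has $x\notin B'$; I would exhibit a flag $(A,B)$ with $x\in B$ that is opposite to $(A',B')$. Take $B\supseteq\{x\}$ with $B\cup B'=[n]$ — possible since $|B'|=n-3<n-1=|B|+ \text{?}$; more precisely $[n]\setminus B'$ has $3$ elements, one of which can be $x$ (as $x\notin B'$), and we need $[n]\setminus B'\subseteq B$, which leaves $b-3=n-6\geq 1$ further elements of $B$ to be chosen freely inside $B'\setminus\{x\}$ (possible for $n\geq 9$); and we need $A'=\{x'\}$ with $x'\notin B$: since $|B|=n-3<n-1$ and $x'$ is a single element, we can avoid it as long as $B$ can be chosen to miss $x'$, which is compatible with the above when $n$ is large. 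Such $(A,B)$ would have to lie in $\EKR$ by the full-weight assumption but is opposite to $(A',B')$, a contradiction. (One must be slightly careful that the choices of the free elements of $B$ can simultaneously avoid $x'$; this is where the $n\geq 9$ hypothesis, or at least $n$ not too small, enters, and is the one genuinely fiddly point.)

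For the gap bound, suppose $A=\{x\}$ has weight $<{n-1\choose 3}$. By the characterization just proved, there is a flag $(A',B')\in\EKR$ with $x\notin B'$. Write $A'=\{x'\}$. Now I count how many flags $(A,B)$ with $x\in B$ are forced to be \emph{excluded} from $\EKR$ because they are opposite to $(A',B')$. A flag $(A,B)=(\{x\},B)$ with $x\in B$ is opposite to $(\{x'\},B')$ iff $B\cup B'=[n]$ (equivalently $[n]\setminus B'\subseteq B$) and $x'\notin B$. Since $|[n]\setminus B'|=3$ and $x\notin B'$, the point $x$ is one of these three forced elements; so $B$ is determined by choosing the remaining $b-3=n-6$ elements of $B$ from within $B'\setminus\{x'\}$, which has $|B'|-[x'\in B']$ elements. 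Since $x'\notin B'$? — no: $x'$ may or may not be in $B'$. If $x'\notin B'$ then $x'$ is among the three forced elements $[n]\setminus B'$, contradicting $[n]\setminus B'\subseteq B$ with $x'\notin B$; so this case cannot contribute, meaning we may assume $x'\in B'$. Then $B'\setminus\{x'\}$ has $n-4$ elements and we choose $n-6$ of them for $B$; that gives ${n-4\choose n-6}={n-4\choose 2}$ flags $(\{x\},B)$, all opposite to $(A',B')$ and hence all outside $\EKR$. Therefore the weight of $A$ is at most ${n-1\choose 3}-{n-4\choose 2}$.

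The main obstacle I anticipate is the careful bookkeeping in the ``only if'' direction of the first claim: one must produce a single flag $(\{x\},B)$ that simultaneously satisfies $[n]\setminus B'\subseteq B$, $x\in B$, and $x'\notin B$, and verify the required cardinalities are compatible. The cleanest route is to note $[n]\setminus B'$ has exactly $3$ elements including $x$, so set those $3$ as part of $B$ (none of them is $x'$ precisely when $x'\in B'$, which is the generic case; the case $x'\notin B'$ can be dispatched by first choosing a different opposite flag or by observing the constraints are then even easier), then fill the remaining $n-6$ slots of $B$ from $B'\setminus\{x'\}$, a set of size $n-4\geq n-6$. Once the combinatorial identities ${n-4\choose n-6}={n-4\choose 2}$ and the count ${n-1\choose 3}$ of point-containing flags are in hand, both statements follow by the same maximality argument used in Lemma \ref{L: weight}, essentially verbatim. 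I would present it in two short paragraphs paralleling the proofs of Lemmata \ref{L: weight} and \ref{L: weight B gap}.
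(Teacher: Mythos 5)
Your proposal is correct and takes essentially the same approach as the paper: both arguments hinge on counting the flags $(A,\tilde{B})$ containing $A$ that are opposite to a witness flag $(A',B')\in\EKR$ with $A\cap B'=\emptyset$, obtaining exactly ${n-4\choose 2}$ such flags, with the full-weight characterization following from maximality of $\EKR$. Your only detour, the case distinction on whether $x'\in B'$, is vacuous: since $(A',B')$ is a flag of type $\{1,n-3\}$ we automatically have $A'\subseteq B'$, so the count proceeds exactly as in the paper without that worry.
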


\begin{proof}
Assume that there is a flag $(A',B')$ in $\EKR$ with $A\cap B'=\emptyset$. The number of flags $(A,\tilde{B})$ that are opposite to $(A',B')$ can be counted as follows. There are $2$ elements in $[n]$ that are not in $A\cup B'$. Therefore $\tilde{B}$ has to contain these two elements, and also $A$. Hence we have to choose $(n-3)-3$ elements. We can choose out of all the remaining $n-3$ elements of $[n]$ except the one in $A'$. This yields that there are ${n-3-1 \choose n-3-3}={n-4\choose 2}$ flags that contain $A$ and are opposite to $(A',B')$.
\end{proof}

\subsection{Shifting}

The original Erd\H{o}s-Ko-Rado Theorem was proven with an operation on set systems that is known as shifting. 
The shifting-operator is defined as follows.

\begin{de} \label{D: shift}
Let $1 \leq i, j\leq n$ and $A\subseteq [n]$. The $i,j$-shift $S_{i,j}(A)$ of $A$ is defined as follows.
  \begin{empheq}[left={S_{i,j}(A):=\empheqlbrace}]{alignat*=2}
    & (A\setminus\{i\})\cup j \qquad    && \text{if}\  i\in A \ \text{and}\ j\notin A, \\
    & A \qquad && \text{else.}
  \end{empheq}	
\end{de}

With this we can define shifting for flags and sets of flags of $[n]$. We write the vertices of $\Gamma(n,T)$ as tuples.

\begin{de} \label{D: shift of flag}
	Let $1 \leq i, j\leq n$ and let $f=(F_{1},\ldots ,F_{t} )$ be a vertex of $\Gamma(n,T)$ for some $T\subseteq [n]$ with $|T|=t$. We define the $i,j$-shift of $f$ as follows.
	$$ S_{i,j}(f):=( S_{i,j}(F_{1}), \ldots, S_{i,j}(F_{t})  ). $$
	If $\EKR$ is a set of vertices of $\Gamma(n,T)$ and $f\in \EKR$, we define
	 \begin{empheq}[left={S_{i,j}^\EKR(f):=\empheqlbrace}]{alignat*=2}
    & S_{i,j}(f) \qquad    && \text{if}\  S_{i,j}(f) \notin \EKR, \\
    & f \qquad && \text{else.}
  \end{empheq}
and
	$$ S_{i,j}(\EKR):=\{ S_{i,j}^\EKR(f) \mid f\in \EKR \}. $$
\end{de}

It is easy to observe that for $A\subseteq B\subseteq [n]$, we have  $S_{i,j}(A)\subseteq S_{i,j}(B)\subseteq [n]$, so shifts of flags are flags.
Note that $S_{i,j}$ is a cardinality-preserving map on the power set 
of $[n]$. Therefore, the $i,j$-shift of any flag of type $T$ is also a flag of type $T$.

\begin{lemma} \label{L: S(f) and S(f') non-opp}
	If $f$ and $f'$ are flags of $\Gamma(n,T)$ that are non-opposite, then $S_{i,j}(f)$ and $S_{i,j}(f')$ are also non-opposite.
\end{lemma}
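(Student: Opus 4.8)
The plan is to prove the contrapositive in a structured way: assume $S_{i,j}(f)$ and $S_{i,j}(f')$ are opposite and deduce that $f$ and $f'$ are already opposite. Write $f=(A_1,B_1)$ and $f'=(A_2,B_2)$ as in the notation of the paper (the argument for general $T$ reduces to controlling each pair of components, but since our application is $\Gamma(n,a,b)$ I would phrase it for the two-component case, or alternatively note that opposition of flags is the conjunction of opposition statements on the individual members and that $i,j$-shift acts componentwise, so it suffices to treat each such statement). By Lemma~\ref{L: opposition}, opposition of $S_{i,j}(f)$ and $S_{i,j}(f')$ is the conjunction of $S_{i,j}(B_1)\cup S_{i,j}(B_2)=[n]$, $S_{i,j}(A_1)\cap S_{i,j}(B_2)=\emptyset$, and $S_{i,j}(A_2)\cap S_{i,j}(B_1)=\emptyset$. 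So the whole lemma reduces to two elementary claims about the shift operator on subsets: if $S_{i,j}(X)\cup S_{i,j}(Y)=[n]$ then $X\cup Y=[n]$, and if $S_{i,j}(X)\cap S_{i,j}(Y)=\emptyset$ then $X\cap Y=\emptyset$.

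Next I would prove these two claims by a direct case analysis on the membership of $i$ and $j$ in $X$ and in $Y$. For the union claim: the only way $S_{i,j}$ changes a set is to replace $i$ by $j$; so an element $k\notin\{i,j\}$ lies in $S_{i,j}(X)$ iff it lies in $X$, hence the only possible failure of $X\cup Y=[n]$ while $S_{i,j}(X)\cup S_{i,j}(Y)=[n]$ concerns the elements $i$ and $j$. One checks: $j\in S_{i,j}(X)\cup S_{i,j}(Y)$ forces $j\in X\cup Y$ or $i\in X\cup Y$; and if $i\notin X\cup Y$ then $i\notin S_{i,j}(X)$ and $i\notin S_{i,j}(Y)$ unless... actually $i$ can only be removed by the shift, never added, so $i\in S_{i,j}(X)\cup S_{i,j}(Y)$ implies $i\in X\cup Y$. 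Then the remaining subtlety is exactly the possibility that $i\in X$, $j\notin X$, $i\notin Y$, $j\notin Y$, so that $S_{i,j}(X)$ gains $j$ and loses $i$, potentially repairing a hole at $j$; but in that subcase $i\in X\cup Y$ already and $j$: we need $j\in X\cup Y$, which may fail — so one must look more carefully and use that $S_{i,j}(Y)$ then equals $Y$, and $S_{i,j}(X)\cup S_{i,j}(Y)=[n]$ would require $j\in S_{i,j}(X)\cup S_{i,j}(Y)$ and $i$: since $i\notin S_{i,j}(X)$ here, we'd need $i\in Y$, contradiction. So this subcase cannot yield a full union after shifting unless the union was already full, and one tabulates the handful of remaining subcases similarly. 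The intersection claim is dual and slightly easier: shifting can only move $i$ to $j$, so a common element $k\notin\{i,j\}$ of $X$ and $Y$ survives in both shifts; a common element equal to $i$ either survives as $i$ in both (if $j$ blocks the shift in one of them) or becomes a common $j$; and a common element equal to $j$ survives as $j$ in both. Hence $X\cap Y\neq\emptyset$ implies $S_{i,j}(X)\cap S_{i,j}(Y)\neq\emptyset$, which is the contrapositive of what we want.

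Finally I would assemble: given that $S_{i,j}(f)$ and $S_{i,j}(f')$ are non-opposite, at least one of the three opposition conditions from Lemma~\ref{L: opposition} fails for the shifted flags; applying the appropriate one of the two claims above (union claim for the $B_1\cup B_2$ condition, intersection claim for the two $A\cap B$ conditions) shows that the same condition fails for $f$ and $f'$, so $f$ and $f'$ are non-opposite. For general type $T$ the same bookkeeping applies to every pairwise condition between a member of $f$ and a member of $f'$.

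The main obstacle is the union claim, specifically the case $i\in X$, $j\notin X\cup Y$, $i\notin Y$, where the shift genuinely alters $X$ by both adding $j$ and deleting $i$; one has to verify that in every such configuration the equality $S_{i,j}(X)\cup S_{i,j}(Y)=[n]$ is impossible unless $X\cup Y=[n]$ already. The cleanest route is to observe the general fact that $S_{i,j}(X)\subseteq X\cup\{j\}$ and $i\notin S_{i,j}(X)$ whenever $i\in X,j\notin X$, turn this into the statement $S_{i,j}(X)\cup S_{i,j}(Y)\subseteq (X\cup Y)\cup\{j\}$ together with ``$i$ is covered only if it was covered before,'' and conclude that a full union downstairs is forced; everything else is a short finite check.
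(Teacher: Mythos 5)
Your core argument is essentially correct and takes a genuinely different route from the paper's. The paper argues in one short step: a witnessing non-opposite pair $A\in f$, $A'\in f'$ satisfies $|A\cap A'|>\max(0,|A|+|A'|-n)$, and since $S_{i,j}$ preserves cardinalities and satisfies $|A\cap A'|\le|S_{i,j}(A)\cap S_{i,j}(A')|$, the shifted pair is still non-opposite; this works uniformly for every type $T$ and never needs Lemma~\ref{L: opposition}. Your decomposition into the two set-level claims ($S_{i,j}(X)\cup S_{i,j}(Y)=[n]\Rightarrow X\cup Y=[n]$ and $S_{i,j}(X)\cap S_{i,j}(Y)=\emptyset\Rightarrow X\cap Y=\emptyset$) is a valid, if longer, substitute, and your case analyses do establish both claims (one small slip: when $i\in X\cap Y$ and $j$ lies in exactly one of $X,Y$, the surviving common element is $j$, not $i$; the conclusion is unaffected). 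What the paper's single inequality buys is brevity and type-independence; what your version buys is an explicit description of which opposition condition is preserved, at the price of the case check.

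The one genuine defect is your final assembly paragraph, which is written in the reverse direction: you start from ``$S_{i,j}(f)$ and $S_{i,j}(f')$ are non-opposite'' and conclude ``$f$ and $f'$ are non-opposite''. That is the converse of the lemma, and it is false --- the paper's example immediately after the lemma ($f=(\{1,2\})$, $f'=(\{3,4\})$, shift $S_{3,2}$) gives opposite flags whose shifts are non-opposite. Moreover your two claims cannot be ``applied'' in that direction: they transfer opposition conditions from the shifted pair down to the original pair, so their contrapositives transfer failures from the original pair up to the shifted pair, not the other way round. The repair is immediate and matches the plan you announced in your first paragraph: either assume $S_{i,j}(f)$ and $S_{i,j}(f')$ are opposite, apply the claims to each pairwise condition to conclude that $f$ and $f'$ are opposite; or argue directly that if some condition fails for a pair $X\in f$, $Y\in f'$, then by the contrapositive of the corresponding claim it fails for $S_{i,j}(X)$, $S_{i,j}(Y)$, so the shifted flags are non-opposite.
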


\begin{proof}
	If $f$ and $f'$ are non-opposite, there are sets $A\in f$ and $A'\in f'$ such that $A$ and $A'$ are non-opposite. 
	If $|A|+|A'|<n$, we have $A\cap A'\neq \emptyset$, and if $|A|+|A'|\geq n$, we have $A\cup A' \neq [n]$. In any case $|A\cap A'|$ is large enough. However $|A\cap A'|\leq |S_{i,j}(A)\cap S_{i,j}(A')|$. Therefore $f$ and $f'$ are also non-opposite.
\end{proof}

Note that if $f$ and $f'$ are flags of $[n]$ with type $T$ that are opposite, then $S_{i,j}(f)$ and $S_{i,j}(f')$ are not necessarily opposite. Take for example $n=4$ and $T=\{2 \}$. Then the flags $(\{1,2 \})$ and $(\{3,4 \})$ are opposite, but $S_{3,2}((\{1,2 \}))=(\{ 1,2\})$ and $S_{3,2}((\{3,4 \}))=(\{2,4 \})$ are not opposite.

\begin{lemma}
Let $\EKR$ be an independent set of $\Gamma(n,T)$ for some $T\subseteq [n]$. Then $S_{i,j}(\EKR)$ is also an independent set of $\Gamma(n,T)$ with $|\EKR|=|S_{i,j}(\EKR)|$.	
\end{lemma}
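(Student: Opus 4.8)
The plan is to prove the two claims separately: first that $S_{i,j}(\EKR)$ is again independent, and then that it has the same cardinality as $\EKR$.

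For the cardinality, the key point is that $f\mapsto S_{i,j}^\EKR(f)$ is injective on $\EKR$. Suppose $f_1,f_2\in\EKR$ with $S_{i,j}^\EKR(f_1)=S_{i,j}^\EKR(f_2)=:g$. If neither flag was moved, then $f_1=g=f_2$. If both were moved, then $S_{i,j}(f_1)=g=S_{i,j}(f_2)$; since $S_{i,j}$ is injective on flags (on each component it either fixes a set or swaps the roles of $i$ and $j$, and these cases are distinguishable from the image because $j\in S_{i,j}(A)$ whenever a move happened and $i\notin S_{i,j}(A)$ then), we get $f_1=f_2$. The remaining case is that $f_1$ was moved but $f_2$ was not, so $S_{i,j}(f_1)=g=f_2\in\EKR$; but $f_1$ is moved precisely when $S_{i,j}(f_1)\notin\EKR$, a contradiction. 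Hence the map is injective, and since $S_{i,j}(\EKR)$ is by definition its image, $|S_{i,j}(\EKR)|=|\EKR|$.

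For independence, take two distinct elements $g_1,g_2\in S_{i,j}(\EKR)$, say $g_k=S_{i,j}^\EKR(f_k)$ with $f_k\in\EKR$; by the injectivity just shown we may take $f_1\neq f_2$. Since $\EKR$ is independent, $f_1$ and $f_2$ are non-opposite. We must show $g_1$ and $g_2$ are non-opposite. If both flags were actually shifted, then $g_k=S_{i,j}(f_k)$ and Lemma \ref{L: S(f) and S(f') non-opp} applies directly. If neither was shifted, then $g_1=f_1$ and $g_2=f_2$ are non-opposite by assumption. The one case needing care is when exactly one was shifted, say $g_1=S_{i,j}(f_1)$ and $g_2=f_2$ (with $f_2$ unchanged). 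Here I would argue that $f_2$ unchanged means $S_{i,j}^\EKR$ fixed it, which happens either because $S_{i,j}(f_2)=f_2$ — in which case again $g_1=S_{i,j}(f_1)$ and $g_2=S_{i,j}(f_2)$ are non-opposite by Lemma \ref{L: S(f) and S(f') non-opp} — or because $S_{i,j}(f_2)\in\EKR$. In the latter subcase, set $f_2':=S_{i,j}(f_2)\in\EKR$. If $f_2'\neq f_1$, then $f_1$ and $f_2'$ are non-opposite (both in $\EKR$), so by Lemma \ref{L: S(f) and S(f') non-opp} $S_{i,j}(f_1)$ and $S_{i,j}(f_2')=S_{i,j}(f_2)=f_2$ are non-opposite — wait, one must check $S_{i,j}(f_2')=f_2$, i.e. that applying the shift twice after it has already ``landed'' returns something non-opposite to $S_{i,j}(f_1)$; in fact $S_{i,j}$ is idempotent on each component ($S_{i,j}(S_{i,j}(A))=S_{i,j}(A)$), so $S_{i,j}(f_2')=f_2'$ and we conclude $S_{i,j}(f_1)$ and $f_2'$ are non-opposite, hence so are $S_{i,j}(f_1)$ and $f_2$ since $f_2$ and $f_2'$ differ only in that...

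\medskip

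I expect the main obstacle to be precisely this mixed case where one flag is shifted and the other is not. The clean way to handle it, which I would carry out carefully, is: when $g_2=f_2$ is unshifted, either $S_{i,j}(f_2)=f_2$ (easy, apply the Lemma), or $f_2':=S_{i,j}(f_2)\in\EKR$ with $f_2'\neq f_2$. In the hard subcase, note $f_2'\neq f_1$ would be needed to invoke independence of $\EKR$ on the pair $\{f_1,f_2'\}$; but if $f_2'=f_1$ then $g_1=S_{i,j}(f_1)=S_{i,j}(f_2')=f_2'=f_1$, so $g_1$ and $g_2=f_2$ are non-opposite because $f_1=f_2'=S_{i,j}(f_2)$ and $f_2$ are non-opposite (a flag and its shift are never opposite, since $|A\cap A'|\le|S_{i,j}(A)\cap S_{i,j}(A')|$ applied with $A'=S_{i,j}(A)$ shows a set always intersects its own shift enough — more directly, $A$ and $S_{i,j}(A)$ share $|A|-1$ elements). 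And if $f_2'\neq f_1$, then $f_1,f_2'\in\EKR$ are non-opposite, so by Lemma \ref{L: S(f) and S(f') non-opp} $S_{i,j}(f_1)=g_1$ and $S_{i,j}(f_2')=f_2'$ are non-opposite (using idempotency of $S_{i,j}$); and since $f_2'$ and $f_2$ differ by a shift, hence are non-opposite, transitivity of ``the intersection is large enough'' (Remark \ref{R: intersection}) yields that $g_1$ and $f_2$ are non-opposite as well. This completes all cases, and together with the cardinality count the lemma follows.
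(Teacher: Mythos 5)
Your overall decomposition (injectivity of $S_{i,j}^\EKR$ for the cardinality claim, then a case analysis on which of the two flags was actually shifted) matches the paper's strategy, but the crucial mixed case is not correct as written. When $g_1=S_{i,j}(f_1)$ with $S_{i,j}(f_1)\notin\EKR$ and $g_2=f_2$ is kept because $f_2':=S_{i,j}(f_2)$ already lies in $\EKR$, you deduce that $g_1$ and $f_2'$ are non-opposite and then pass to $g_1$ and $f_2$ by ``transitivity of the intersection being large enough''. Non-opposition is not transitive, and Remark \ref{R: intersection} does not supply what you need: it only says that non-opposition with a \emph{fixed} flag is preserved when the relevant intersection with that same flag does not decrease, whereas replacing $f_2'$ by $f_2$ can strictly decrease the intersection with $g_1$ (un-shifting may remove exactly the element $j$ that witnessed the intersection; e.g.\ $\{1,2\}$ meets $S_{3,1}(\{3,4\})=\{1,4\}$ but is opposite to $\{3,4\}$). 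So the step ``$g_1$ non-opposite to $f_2'$ and $f_2'$ non-opposite to $f_2$, hence $g_1$ non-opposite to $f_2$'' is an invalid inference, and this is precisely the hard case of the lemma. The paper's proof handles it by a different idea: assuming $f_2$ and $S_{i,j}(f_1)$ are opposite, it shows that then $S_{i,j}(f_2)$ and $f_1$ must be opposite (contradicting independence, since both lie in $\EKR$), and this transfer of the shift from the moved flag to the unmoved one uses the chain structure of the moved flag $f_1$ in an essential way: from $f_1\neq S_{i,j}(f_1)$ one gets a member $B'$ of $f_1$ with $i\in B'$, $j\notin B'$, and a member of $f_1$ containing $j$ but not $i$ would be incomparable with $B'$. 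Nothing in your argument plays this role. (Your side remark that a flag is never opposite to its own shift is also false in general, e.g.\ for type $\{1\}$, though the subcase $f_2'=f_1$ where you invoke it is in fact vacuous, since it would force $S_{i,j}(f_1)=f_1$.)

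There is a second, smaller gap in the cardinality part. The claim that $S_{i,j}$ is injective on flags is false: a fixed flag and a moved flag can have the same image (for instance $(\{1\},\{1,3\})$ is the $S_{2,1}$-image both of itself and of $(\{2\},\{2,3\})$), and one cannot tell from the image whether a move happened, because a fixed set may equally well contain $j$ and omit $i$. What you actually need is weaker --- two flags that are both genuinely changed by $S_{i,j}$ and have the same image coincide --- and this is true, but its proof again requires the flag structure: in a component where one flag is fixed and the other is moved, the fixed component contains $j$ and omits $i$, and the chain condition then forces every component of that flag to be fixed, contradicting that it was moved. This is exactly the third case in the paper's injectivity argument, and your parenthetical justification skips it.
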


\begin{proof}
We use $S$ to denote $S_{i,j}$.
    First, let us assume that $S(\EKR)$ is not an independent set. 
	Then, we can find two flags $f$ and $f'$ in $\EKR$ with $S^\EKR(f)$ and $S^\EKR(f')$ being opposite. Since $f$ and $f'$ are non-opposite, Lemma \ref{L: S(f) and S(f') non-opp} implies that $S(f)$ and $S(f')$ are non-opposite, therefore we can assume w.l.o.g. that $S^\EKR(f)=f$, since $S(f)$ is already in $\EKR$, and $S^\EKR(f')=S(f')\neq f'$. 
	
	Let $A\in f$ and $A'\in f'$ be such that $A$ and $A'$ are non-opposite. As, by assumption $S(f')$ and $f$ are opposite, we get that $S(A')$ and $A$ are opposite. 
	In view of Remark \ref{R: intersection} this implies $|A\cap S(A')|< |A\cap A'|$.
	It follows that $i\in A\cap A'$, as well as $j\notin A\cup A'$. Hence, we have $|A\cap S(A')|=|S(A)\cap A'|$ which implies that $S(A)$ and $A'$ are opposite.

	Now, let $A\in f$ and $A'\in f'$ be such that $A$ and $A'$ are opposite. 
    Assume that $S(A)$ and $A'$ are non-opposite. This implies 
    $|S(A)\cap A'|>|A\cap A'|$ and therefore
    $i\in A$ and $j\notin A$ as well as $j\in A'$ and $i\notin A'$.
     Since $f'\neq S(f')$ there is a subset $B'$ in $f'$ with $j\notin B'$ and $i\in B'$. 
     This implies $A'\not\subseteq B'$ and $B'\not\subseteq A'$, which is a contradiction.
    Hence, $S(A)$ and $A'$ are opposite.\\
    We have shown for any $A\in f$ and $A'\in f'$  that $S(A)$ and $A'$ are opposite. This implies that $S(f)$ and $f'$ are opposite, which is a contradiction since both are in $\EKR$.

    We have shown that $S(\EKR)$ is an independent set, now we show that $|\EKR|=|S(\EKR)|$, by showing that $S^\EKR$ is injective.
    Let $f,f'$ be in $\EKR$. 
     If  $S^\EKR (f)=f$ and $S^\EKR(f')=f'$ we have $S^\EKR(f)=S^\EKR (f')$ if and only if $f=f'$.
    If  $S^\EKR (f)=S(f)\neq f$ and $S^\EKR(f')=f'$ we cannot have $S^\EKR(f)= S^\EKR(f')$ via the definition of $S^\EKR$.\\
    Now, assume $S^\EKR (f)=S(f)\neq f$ and $S^\EKR(f')=S(f')\neq f'$ with $S(f)=S(f')$. 
    Let $f=(F_{1},\ldots ,F_{t} )$ and $f'=(F'_{1},\ldots ,F'_{t} )$.
First, consider the case $S(F_k)=F_k$ and $S(F'_k)=F'_k$ for some $1\leq k\leq t$. Then clearly $F_k=F'_k$, as $S(F_k)=S(F'_k)$.
Next, consider the case $S(F_k)\neq F_k$ and $S(F'_k)\neq F'_k$ for some $1\leq k\leq t$. This yields $i\in F_k,F'_k$ and $j\notin F_k,F'_k$. As $S(F_k)=S(F'_k)$, we have $F_k=F'_k$.\\
Finally, consider the case $S(F_k)=F_k$ and $S(F'_k)\neq F'_k$ for some $1\leq k\leq t$. This yields $i\in F'_k$ and $j\notin F'_k$. As $F_k=S(F_k)=S(F'_k)$, we have $j\in F_k$ and  $i\notin F_k$. For any $\ell$ with $k<\ell\leq t$, we have $j\in F_{\ell}$, as $F_k\subseteq F_\ell$, and hence $S(F_\ell)=F_\ell$. For any $r$ with $1\leq r<k$, we have $i\notin F_r$, as $F_r\subseteq F_k$, and hence $S(F_r)=F_r$. However, this yields $S(f)=f$, which is a contradiction.
\end{proof}

A shift $S_{i,j}$ with $i\geq j$ shall be called a left-shift.
If $\EKR$ is an independent set of $\Gamma(n,T)$ and $S_{i,j}(\EKR)=\EKR$ for all $i\geq j$, we call $\EKR$ left-shifted. For any independent set $\EKR$, there is a finite number of pairs $(i_1,j_1),\ldots,(i_r,j_r)$ such that $S_{i_r,j_r}(\ldots (S_{i_1,j_1}(\EKR))\ldots )$ is a left-shifted independent set. We use this in Section \ref{Section: The proof of Theorem}, where we consider only left-shifted maximum independent sets in the proofs.

From the definition of shifting on independent sets we immediately get the following.

\begin{lemma}y \label{L: left-shift}
	Let $\EKR$ be a left-shifted independent set of $\Gamma(n,T)$ and let $f$ be a flag of $\EKR$. 
	For every left-shift $S_{i,j}$, we have that $S_{i,j}(f)$ is in $\EKR$.
\end{lemma}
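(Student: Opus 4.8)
The plan is to unwind the definitions of $S_{i,j}^\EKR$ and of a left-shifted set, and to argue by contradiction. Fix a left-shift $S_{i,j}$ (so $i\geq j$) and a flag $f\in\EKR$. Since $\EKR$ is left-shifted, we have $S_{i,j}(\EKR)=\EKR$, and by definition $S_{i,j}(\EKR)=\{\,S_{i,j}^\EKR(g)\mid g\in\EKR\,\}$, where $S_{i,j}^\EKR(g)$ equals $S_{i,j}(g)$ if $S_{i,j}(g)\notin\EKR$ and equals $g$ otherwise.

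Suppose, for contradiction, that $S_{i,j}(f)\notin\EKR$. Then, directly from the definition of $S_{i,j}^\EKR$, we get $S_{i,j}^\EKR(f)=S_{i,j}(f)$. Consequently $S_{i,j}(f)=S_{i,j}^\EKR(f)\in S_{i,j}(\EKR)=\EKR$, which contradicts the assumption $S_{i,j}(f)\notin\EKR$. Hence $S_{i,j}(f)\in\EKR$, as claimed.

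There is essentially no obstacle here: the statement is a formal consequence of the definitions introduced just above, and the only point worth making explicit is that the two exhaustive alternatives in the definition of $S_{i,j}^\EKR(f)$ — namely $S_{i,j}(f)\in\EKR$ or $S_{i,j}(f)\notin\EKR$ — both force $S_{i,j}(f)\in\EKR$, the first trivially and the second via the equality $S_{i,j}(\EKR)=\EKR$. I would therefore present the argument in two or three lines, possibly even folding it into a short remark, since it merely records that left-shiftedness makes the (otherwise conditional) shift operator act as the genuine shift on every flag of $\EKR$, which is exactly the form in which it will be applied in Section~\ref{Section: The proof of Theorem}.
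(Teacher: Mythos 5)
Your argument is correct and is precisely the unwinding of the definitions that the paper leaves implicit: it states the lemma as following immediately from the definition of shifting on independent sets and gives no written proof. Your two-line contradiction via $S_{i,j}^\EKR(f)=S_{i,j}(f)\in S_{i,j}(\EKR)=\EKR$ is exactly the intended reasoning, so the proposal matches the paper's approach.
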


\section{Examples} \label{S: examples}

The following examples for large independent sets of $\Gamma(n,a,b)$ are taken from Section 3 of \cite{EKR_finite_sets_Metsch}.

\begin{example} (\cite{EKR_finite_sets_Metsch})
For  $0\leq i \leq 2b-n$ let
	$\EKR_i(n,a,b)$ be the independent set of flags $(A,B)$ that satisfy at least one of the following two conditions
	\begin{itemize}
		\item $[i]\subseteq B \subseteq [n-1]$,
		\item $min(A)\leq i$ and $[min(A)]\subseteq B$.
	\end{itemize}
\end{example}

\begin{remark} \label{R: weight of examples}
	For $i=0$ all $B\subseteq [n]$ with $|B|=b$ that are contained in a flag of $\EKR_0(n,a,b)$ have maximal weight in the sense of Lemma \ref{L: weight}. For $i>0$ this is not the case. 
Furthermore, $\EKR_0(n,a,b)$ is left-shifted.
Finally, if $i=0$ the subsets $A\subseteq [n]$ with $|A|=a$ do not have maximal weight in the sense of Lemma \ref{L: weight-A}. For $a=1$ and $i>0$, we have that $\{ 1\}$ has maximal weight.
\end{remark}

Some of the examples above meet the bound of Theorem \ref{T: main theorem}.

\begin{remark} \label{E: example s=3}
Lemma 3.4 in \cite{EKR_finite_sets_Metsch} implies that
 the set $\EKR_{n-9}(n,1,n-3)$ has size ${n\choose 4}+42$ for all $n\geq 9$
 and that $\EKR_{n-10}(n,1,n-3)$ has size ${n\choose 4}+42$ for all $n\geq 10$. 
\end{remark}

\section{Computational results} \label{S: computer results}

If $n$ is small enough we can compute the independence number of $\Gamma(n,a,b)$. Our code is implemented in GAP4 \cite{GAP4} and uses the package GRAPE \cite{GRAPE4.9.0}. The code is available at \cite{GAPcode}. In \cite{pan_automorphgroupgeneralposgraph} it was shown that the automorphism group of $\Gamma(n,a,b)$ is $Sym(n)$, answering a question of \cite{EKR_finite_sets_Metsch}. We utilize this result in \cite{GAPcode} and obtain the following values.

\begin{table}[h]
\begin{center}
\begin{tabular}{|c c|}
 \hline
$n$ & $\alpha(\Gamma(n,1,n-3))$ \\ [0.5ex]
 \hline\hline
 5 & 8  \\ 
 \hline\hline
 6 & 30  \\ 
 \hline\hline
 7 & 60  \\ 
 \hline\hline
 8 & 105   \\ 
 \hline
 9 & 168   \\
 \hline
 10 & 252   \\
 \hline
\end{tabular}
\end{center}
\caption{\label{independence number} Independence number of $\Gamma(n,1,n-3)$}
\end{table}

For $n\leq 6$ we have $n-3\leq n/2$, so the results are somewhat trivial, for the sake of completeness we record them anyway.
We remark that $\alpha(\Gamma(n,1,n-3))=|\EKR_0(n,1,n-3)|={n-1\choose 2}\cdot (n-3)$ for $6\leq n\leq 10$. However for $n>10$, we have $|\EKR_0(n,1,n-3)|<{n\choose 4}+42$.

Note that for $n<9$, we have $\alpha(\Gamma(n,1,n-3))\neq {n\choose 4}+42$. It is also noteworthy that the runtime of our code is substantial for $n\geq 9$. 
We remark that the induction step used to prove Theorem \ref{T: metsch 1,n-2} also works for $\Gamma(n,1,n-3)$, however in order to use this step we would have to determine $\alpha(\Gamma(n,1,n-3))$ for all $n\leq171$ and this is well out of reach for any computer. 
Finally, we mention that determining the independence number of $\Gamma(9,1,6)$ was part of Problem 2 in \cite{EKR_finite_sets_Metsch}.

\section{Maximum EKR-sets of type $\{1,n-3\}$ }
\label{Section: The proof of Theorem}

For the proof of Theorem \ref{T: main theorem} we need the following two, slightly technical lemmata. The proofs involve a version of the inclusion–exclusion principle.

\begin{lemma} \label{L: technical weight}
	Let $\EKR$ be an independent set of $\Gamma(n,1,n-3)$ with $(A_1,B_1), (A_2,B_2)\in \EKR$ such that $A_1=A_2$,  $B_1\neq B_2$ and $|[n]\setminus (B_1\cup B_2) |= 2$. If $A\subseteq [n] \setminus (B_1\cup B_2)$ with $|A|=1$, then $A$ has weight at most ${n-1\choose 3}-(n-5)(n-4)+{n-5\choose 2}$.
\end{lemma}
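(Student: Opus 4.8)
The plan is to turn the weight of $A$ into a counting problem about $(n-3)$-subsets constrained by opposition to the two given flags, and then evaluate that count by inclusion–exclusion. First I would record the structure forced by the hypotheses. Since $|B_1|=|B_2|=n-3$ and $|[n]\setminus(B_1\cup B_2)|=2$, we have $|B_1\cap B_2|=n-4$ and $|B_1\setminus B_2|=|B_2\setminus B_1|=1$. Write $[n]\setminus(B_1\cup B_2)=\{x,y\}$, $B_1\setminus B_2=\{u\}$, $B_2\setminus B_1=\{v\}$ and $A_1=A_2=\{w\}$; as $A_1\subseteq B_1$ and $A_1=A_2\subseteq B_2$ we get $w\in B_1\cap B_2$, so $w,u,v,x,y$ are five pairwise distinct elements, and $[n]\setminus B_1=\{v,x,y\}$, $[n]\setminus B_2=\{u,x,y\}$. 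By the symmetry between $x$ and $y$ it suffices to treat $A=\{x\}$.

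Next I would identify the flags counted by the weight of $A$. Such a flag has the form $(\{x\},B')$ with $|B'|=n-3$ and $x\in B'$. Applying Lemma \ref{L: opposition} to $(\{x\},B')$ and $(A_1,B_1)=(\{w\},B_1)$: the conditions $\{x\}\cap B_1=\emptyset$ holds automatically, $B'\cup B_1=[n]$ means $\{v,x,y\}\subseteq B'$, and $\{w\}\cap B'=\emptyset$ means $w\notin B'$. So $(\{x\},B')$ is opposite to $(A_1,B_1)$ iff $\{v,x,y\}\subseteq B'$ and $w\notin B'$, and likewise opposite to $(A_1,B_2)$ iff $\{u,x,y\}\subseteq B'$ and $w\notin B'$. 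Hence every $B'$ contributing to the weight of $A$ must be non-opposite to both, i.e. (using $x\in B'$) it must avoid the ``bad'' configuration: $w\notin B'$, $y\in B'$, and ($u\in B'$ or $v\in B'$). A short Boolean check confirms that non-opposition to both flags is exactly the negation of this.

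Finally I would count. The number of $(n-3)$-subsets of $[n]$ containing $x$ is $\binom{n-1}{3}$. An $(n-3)$-set that contains a fixed $k$-subset and avoids $w$ can be completed in $\binom{n-1-k}{\,n-3-k\,}$ ways; taking $k=3$ for $\{x,y,u\}$ and for $\{x,y,v\}$, and $k=4$ for $\{x,y,u,v\}$, inclusion–exclusion on the events $u\in B'$ and $v\in B'$ gives the number of bad sets as
$$2\binom{n-4}{2}-\binom{n-5}{2}=(n-5)(n-4)-\binom{n-5}{2}.$$
Subtracting, the weight of $A$ is at most $\binom{n-1}{3}-(n-5)(n-4)+\binom{n-5}{2}$, which is the claimed bound. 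I do not expect a real obstacle here: the only delicate points are the Boolean simplification in the previous paragraph and verifying that $w,u,v,x,y$ are genuinely distinct so that the binomial counts are valid, both of which are immediate from the structural setup.
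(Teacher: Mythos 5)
Your proposal is correct and takes essentially the same route as the paper's proof: flags through $A$ that lie in $\EKR$ must be non-opposite to both $(A_1,B_1)$ and $(A_2,B_2)$, and the flags through $A$ opposite to at least one of them are counted by inclusion--exclusion as $2\binom{n-4}{2}-\binom{n-5}{2}=(n-5)(n-4)-\binom{n-5}{2}$, exactly as in the paper. Your explicit naming of the elements $w,u,v,x,y$ and the Boolean description of the ``bad'' sets is only a more detailed writing of the same count.
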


\begin{proof}
	We count the number of flags containing $A$ that are opposite to $(A_1,B_1)$ or $(A_2,B_2)$. 
	The number of flags containing $A$ that are opposite to one of those is, according to Lemma \ref{L: weight-A} exactly ${n-4\choose 2}$ and $2{n-4\choose 2}=(n-5)(n-4)$.
Now, we count the number of flags containing $A$ that are opposite to both $(A_1,B_1)$ and $(A_2,B_2)$.
	
	Such a flag $(A,B)$ must satisfy $A_1\cap B=\emptyset$ as well as $B\cup B_1=B\cup B_2=[n]$.
	So $[n]\setminus (B_1\cap B_2)$ has to be a subset of $B$.
	We have $|B_1|=|B_2|=n-3$ and $|B_1\cup B_2|=n-2$ as a premise. This yields $|B_1\cap B_2|=n-4$ and $|[n]\setminus (B_1\cap B_2)|=4$.
Therefore, $4$ elements of $B$ are fixed, and keeping in mind that $A_1\cap B=\emptyset$ we can choose the remaining $n-3-4$ elements freely out of the remaining $n-4-1$ elements. This yields ${n-4-1 \choose n-3-4}={n-5\choose 2}$ flags and the statement follows. 	 
\end{proof}

\begin{lemma} \label{L: technical weight 2}
	Let $\EKR$ be an independent set of $\Gamma(n,1,n-3)$.
	Let $B_1$ and $B_2$ be two distinct subsets of $[n]$ with size $n-3$, such that the $\EKR$-weight of $B_i$ is $n-3$ for $i=1,2$.
\begin{enumerate} [(a)]
    \item If $A\subseteq [n] \setminus B_1$ with $|A|=1$, then $A$ has weight at most ${n-1\choose 3}-{n-3\choose 3}$.
    \item If $|[n]\setminus (B_1\cup B_2) |= 2$ and $A\subseteq [n] \setminus (B_1\cup B_2)$ with $|A|=1$, then $A$ has weight at most ${n-1\choose 3}-2{n-3\choose 3}+{n-4\choose 3}$.
\end{enumerate}
\end{lemma}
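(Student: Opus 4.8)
The plan is to prove both parts by an inclusion–exclusion count of the flags containing $A$ that are opposite to the structure guaranteed by the hypotheses, exactly mirroring the proof of Lemma \ref{L: technical weight} but now exploiting that the $B_i$ themselves have \emph{full} weight $n-3$. The key observation is Lemma \ref{L: weight}: a subset $B$ of size $n-3$ has weight $n-3 = \binom{n-3}{1}$ exactly when, for every flag $(A',B')\in\EKR$, either $A'\cap B\neq\emptyset$ or $B\cup B'\neq[n]$. Dually, by Lemma \ref{L: weight-A} applied with the roles reversed, the number of flags $(A,B)$ with $B$ \emph{opposite} to a fixed point $A'$ (i.e.\ $A'\notin B$) while also $B\supseteq A$ is what we need to count; the point is that if $B$ has full weight, then \emph{every} such flag $(A,B)$ lies in $\EKR$, so each one we count genuinely reduces the number of flags through $A$ that can be in $\EKR$.

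For part (a): I would count the flags $(A,B)\in\Gamma(n,1,n-3)$ with $A$ fixed, $A\notin B_1$ (which holds since $A\subseteq[n]\setminus B_1$), and such that $(A,B)$ is opposite to \emph{some} flag of $\EKR$ containing $B_1$ — but more cleanly, I would count flags $(A,B)$ with $B$ opposite to $B_1$ as a $b$-set, i.e.\ $B\cup B_1=[n]$, together with $A\subseteq B$. Since $|[n]\setminus B_1|=3$ and $A$ is one of those three elements, $B$ must contain all of $[n]\setminus B_1$ (three elements, including the one in $A$) and then $n-3-3=n-6$ further elements chosen from the $n-3$ elements of $B_1$; wait — I need $(A,B)$ to actually be forced out of $\EKR$. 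Here is the right bookkeeping: for each flag $(A_0,B)$ with $B\cup B_1=[n]$ and $A_0\subseteq B$, since $B_1$ has full weight, $(A_0,B)\in\EKR$ would force (by Lemma \ref{L: weight}, applied to $B_1$, noting $(A_0,B)\in\EKR$ with $B\cup B_1=[n]$) that $A_0\cap B_1\neq\emptyset$. Hence the flags $(A,B)$ with $A\subseteq B$, $B\cup B_1=[n]$, and $A\cap B_1=\emptyset$ — equivalently $A\subseteq[n]\setminus B_1$ and $B\supseteq([n]\setminus B_1)$ — are all \emph{not} in $\EKR$. Counting these with $A$ fixed: $B$ contains the $3$ elements of $[n]\setminus B_1$ and $n-6$ of the $n-3$ elements of $B_1$, giving $\binom{n-3}{n-6}=\binom{n-3}{3}$ flags, which yields the bound $\binom{n-1}{3}-\binom{n-3}{3}$ in (a).

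For part (b): with $A\subseteq[n]\setminus(B_1\cup B_2)$ and $|[n]\setminus(B_1\cup B_2)|=2$, I would apply the count of part (a) to $B_1$ and to $B_2$ separately, each removing $\binom{n-3}{3}$ flags through $A$, and then add back by inclusion–exclusion the flags through $A$ that are counted twice, namely those $(A,B)$ with $A\subseteq B$, $B\supseteq([n]\setminus B_1)$, $A\cap B_1=\emptyset$, $B\supseteq([n]\setminus B_2)$, and $A\cap B_2=\emptyset$. For such a $B$ we need $B\supseteq[n]\setminus(B_1\cap B_2)$; since $|B_1\cup B_2|=n-2$ forces $|B_1\cap B_2|=n-4$, this is $4$ fixed elements, one of which is in $A$, leaving $n-3-4=n-7$ free choices from the remaining elements that also avoid being blocked — concretely from $[n]\setminus(B_1\cap B_2)$'s complement intersected appropriately, which should total $\binom{n-4}{3}$ after removing the single element of $A$ from a pool of size $n-3$. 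That gives the stated bound $\binom{n-1}{3}-2\binom{n-3}{3}+\binom{n-4}{3}$.

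The main obstacle I anticipate is getting the double-counted set in part (b) described precisely and verifying its cardinality is exactly $\binom{n-4}{3}$: one has to be careful that the flags removed "via $B_1$" and "via $B_2$" are characterized by clean set-containment conditions so that their intersection is again a set of flags $(A,B)$ with $B$ ranging over $b$-subsets containing a fixed $4$-element set and avoiding a fixed $1$-element set, and that no flag is inadvertently triple-counted or that the conditions $A\cap B_1=\emptyset$, $A\cap B_2=\emptyset$ are automatically satisfied by the hypothesis $A\subseteq[n]\setminus(B_1\cup B_2)$. The inclusion–exclusion itself is a two-set formula, so once the three counts $\binom{n-3}{3}$, $\binom{n-3}{3}$, $\binom{n-4}{3}$ are pinned down the result is immediate; I would also double-check the edge arithmetic (that $n\geq 9$, or at least $n\geq 7$, makes all binomial coefficients meaningful and the bound nontrivial).
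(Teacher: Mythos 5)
Your argument is correct and is essentially the paper's proof: since each $B_i$ has full weight, every flag $(A,B)$ with $B\supseteq [n]\setminus B_i$ is forced out of $\EKR$ (giving $\binom{n-3}{3}$ excluded flags through $A$ for each $i$), the doubly excluded flags are exactly those with $B\supseteq [n]\setminus(B_1\cap B_2)$, a $4$-element set, so the remaining $n-7$ elements of $B$ are chosen from the pool $B_1\cap B_2$ of size $n-4$ (this resolves the point you flagged; it is not ``a pool of size $n-3$ minus the element of $A$''), and inclusion--exclusion gives both bounds, with the conditions $A\cap B_i=\emptyset$ automatic from the hypothesis on $A$. One small remark: Lemma \ref{L: weight} is stated for maximal independent sets while here $\EKR$ is only independent, but the direction you use needs no maximality --- indeed full weight of $B_1$ directly provides a flag $(\{x\},B_1)\in\EKR$ with $x\in B_1\setminus B$ that is opposite to $(A,B)$, which is exactly how the paper argues.
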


\begin{proof}
	First, we count the number of flags $(A,B)$ that are opposite to some flag containing $B_1$. As $A\cap B_1=\emptyset$ and $B_1$ has maximal weight, this means $B\cup B_1=[n]$. There are three elements in $[n]\setminus B_1$ and these elements have to be in $B$. Now, we can choose the remaining $n-3-3$ elements of $B$ freely out of the $n-3$ elements in $B_1$. In conclusion we have that the number in question is ${n-3\choose n-3-3}={n-3 \choose 3}$. This implies (a).
	
	Now, we count the number of flags $(A,B)$ that are opposite to some flag containing $B_1$ and some flag containing $B_2$.
	 As $A\cap (B_1\cup B_2)=\emptyset$ and $B_1$ and $B_2$ have maximal weight, this means $B\cup B_1=B\cup B_2=[n]$.
	 We have $|B_1|=|B_2|=n-3$.
     As  $|[n]\setminus (B_1\cup B_2) |= 2$, we have $|B_1\cup B_2|=n-2$.
     This yields $|B_1\cap B_2|=n-4$, hence $|[n]\setminus (B_1\cap B_2) |= 4$. So four elements in $B$ are fixed.
	   Now we can choose the remaining $n-3-4$ elements freely out of the $n-4$ elements in $B_1\cap B_2$. In conclusion we have that the number in question is ${n-4\choose n-3-4}={n-4 \choose 3}$.
       By the inclusion-exclusion principle, the statement of (b) follows.
\end{proof}

\begin{lemma} \label{L:ABproperties}
    Let $\EKR$ be a left-shifted independent set of $\Gamma(n,1,n-3)$. Let $\EKR_2$ be the set of flags $(A,B)$ in $\EKR$ with $1\notin B$. If $|\EKR_2|>(n-3)$, we have that $\EKR_2$ contains two flags $(A_1,B_1), (A_2,B_2)$ with $B_1\neq B_2$,  as well as $n\notin B_1\cup B_2$ and $A_1=A_2=\{2\}$. 
\end{lemma}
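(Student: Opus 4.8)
The plan is to analyze the flags in $\EKR_2$ according to their $A$-component, exploiting the left-shifted property heavily. Since $\EKR$ is left-shifted and every flag $(A,B)\in\EKR_2$ has $1\notin B$, applying the left-shift $S_{i,1}$ for any $i\in B$ shows that $B$ cannot contain $i$ with $1$ outside — wait, more carefully: for a flag $(A,B)$ with $1\notin B$ and $A=\{a\}$, I would first argue that left-shiftedness forces a strong structural normalization. Concretely, if $1\notin B$ and $a\neq 1$, then $a\in B$ is impossible unless blocked; and if $a\notin B$ with $a>1$, then $S_{a,1}(A,B)$ replaces $a$ by $1$ in $A$ (and fixes $B$ since $a\notin B$), so $(\{1\},B)\in\EKR$. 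So I would split into flags with $1\in A$ and flags with $1\notin A$.

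First I would handle the flags $(\{1\},B)\in\EKR_2$: here $1\notin B$ but $1\in A$, and since $1$ is the smallest index, $S_{1,j}$ never moves anything, so left-shiftedness gives no immediate reduction on $B$; instead I would bound their number directly. The key point is that all such flags $(\{1\},B)$ with $1\notin B$ are pairwise non-opposite automatically (their $A$-components both equal $\{1\}$, and $\{1\}\cap B=\emptyset$ fails the condition... no — actually $\{1\}\cap B=\emptyset$ holds, so I need $B_1\cup B_2\neq[n]$ or one of the cross-conditions; since $A_1=A_2=\{1\}$ and $1\notin B_1,B_2$, we have $A_1\cap B_2=\emptyset$, so non-oppositeness must come from $B_1\cup B_2\neq[n]$). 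So the $B$'s appearing with $A=\{1\}$ and $1\notin B$ form an intersecting-type family in the sense that no two of them union to $[n]$; there are at most... this needs care. Alternatively, and more in the spirit of the lemma's hypothesis $|\EKR_2|>(n-3)$, I would instead argue: if $\EKR_2$ contained *only* flags of the form $(\{1\},B)$, then by left-shiftedness applied via $S_{j,k}$ for $2\le k<j$ on such flags (which fix $A=\{1\}$), the collection of $B$'s is itself left-shifted among subsets avoiding $1$; a left-shifted family of $(n-3)$-subsets of $\{2,\dots,n\}$ in which no two union to $[n]$ — i.e. no two are complementary within $[n]$ — I expect has size at most $n-3$ (the complement of $B$ in $[n]$ is a $3$-set containing $1$, and "no two union to $[n]$" translates to a condition on these $3$-sets that a shifting argument caps at $n-3$). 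Thus $|\EKR_2|>n-3$ forces the existence of a flag $(A,B)\in\EKR_2$ with $1\notin A$, hence with $a:=\min A\ge 2$ and, after the shift argument above, I may assume $\{2\}$-flags are present: applying $S_{a,2}$ (a left-shift since $a\ge 2$) to $(\{a\},B)$ gives $(\{2\},S_{a,2}(B))\in\EKR$, and if $2\notin B$ this $B$ is unchanged.

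Next I would produce the *two distinct* $B$'s. Having one flag $(\{2\},B_1)\in\EKR_2$ with $1\notin B_1$, I would use left-shifts $S_{i,j}$ with $n\ge i>j\ge 2$, $i\ne 2$, which fix the component $\{2\}$, to move $B_1$ within $\{2,\dots,n\}$; in particular I can arrange $n\notin B_1$ (if $n\in B_1$, shift $S_{n,j}$ for a suitable $j\notin B_1$, $j\ge 2$, which exists since $|B_1|=n-3<n-1$). To get a *second* set $B_2\ne B_1$ with $(\{2\},B_2)\in\EKR$, $1,n\notin B_2$: I would count. The number of $(n-3)$-subsets $B$ of $\{2,\dots,n-1\}$ is ${n-2\choose n-3}=n-2$. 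If there were at most one flag in $\EKR_2$ with $A=\{2\}$ after normalization, plus the $\le n-3$ flags with $A=\{1\}$, plus... I'd need to show the total can't exceed $n-3$, contradicting the hypothesis. The cleanest route: show that every flag in $\EKR_2$ can be left-shifted to one with $A\in\{\{1\},\{2\}\}$ and $B\subseteq\{2,\dots,n-1\}$ or $B\subseteq\{2,\dots,n\}$ appropriately, then a counting/parity bound on left-shifted such families gives $|\EKR_2|\le n-3$ unless two flags share $A=\{2\}$ with distinct $B$'s avoiding $\{1,n\}$.

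The main obstacle I anticipate is the bookkeeping in the counting step: pinning down the exact bound "$\le n-3$" on the relevant left-shifted sub-family so that strict inequality $|\EKR_2|>n-3$ cleanly forces the desired pair. The left-shifted condition must be leveraged precisely — in particular that $S_{i,1}$-closure forces most $A$-components down to $\{1\}$ or $\{2\}$, and that within the $\{2\}$-block the $B$'s can be normalized into $\{2,\dots,n-1\}$ — and I would need to verify no double-counting occurs between the $\{1\}$-block and the $\{2\}$-block. Once the bound $n-3$ is established for each block in a way that doesn't overlap, the pigeonhole conclusion is immediate.
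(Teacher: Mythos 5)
There is a genuine gap, and it starts with a structural misunderstanding: in this paper a flag $(A,B)$ satisfies $A\subseteq B$, so for every flag of $\EKR_2$ (where $1\notin B$) the element of $A$ lies in $B$ and is different from $1$. Consequently the objects $(\{1\},B)$ with $1\notin B$ that you spend the first half of the argument analysing are not flags at all, and your claim that $S_{a,1}$ ``fixes $B$ since $a\notin B$'' is false: since $a\in A\subseteq B$, the shift $S_{a,1}$ replaces $a$ by $1$ in $B$ as well, so the shifted flag has $1\in B$ and leaves $\EKR_2$. This removes the intended mechanism for forcing $A$-components down to $\{1\}$ or $\{2\}$, and the whole ``$\{1\}$-block versus $\{2\}$-block'' decomposition collapses. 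You also miss the one place where the hypothesis $|\EKR_2|>n-3$ is actually used, which is elementary: a fixed $(n-3)$-set $B$ lies in at most $n-3$ flags (one per singleton $A\subseteq B$), so $|\EKR_2|>n-3$ immediately gives two flags of $\EKR_2$ with distinct $B$-components by pigeonhole; no bound of the form ``a left-shifted family with such-and-such property has size at most $n-3$'' is needed, and you do not prove the one you invoke.

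The remaining, and in fact main, content of the lemma is the normalization you only sketch: starting from two flags of $\EKR_2$ with $B_1\neq B_2$, the paper applies left-shifts $S_{n,i}$ (with $i>1$ not in the relevant $B$) to remove $n$ from both $B_1$ and $B_2$, then shifts $S_{k,2}$ to put $2$ into both, and finally shifts $S_{i,2}$ on the $A$-components to reach $A_1=A_2=\{2\}$; all targets are $\geq 2$, so $1$ never enters the $B$'s and the shifted flags stay in $\EKR_2$. The delicate point, which your proposal does not address, is preserving $B_1\neq B_2$ through these shifts: the paper does this by noting that $B_2$ misses at least two admissible elements $i,j>1$, so $S_{n,i}(B_2)\neq S_{n,j}(B_2)$ and at least one of the two shifted sets differs from the (possibly shifted) $B_1$. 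Your counting route ending in ``a counting/parity bound \dots gives $|\EKR_2|\le n-3$ unless \dots'', which you yourself flag as the main obstacle, is not established, so as written the proposal does not prove the lemma.
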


\begin{proof}
    As $|\EKR_2|>(n-3)$, we have that $\EKR_2$ contains two flags $(A_1,B_1), (A_2,B_2)$ with $B_1\neq B_2$. We show that we can assume w.l.o.g. that $n\notin B_1\cup B_2$ as well as $A_1=A_2=\{2\}$.

If $A_1=\{i\}$ and $A_2=\{j\}$ with $2\in B_1,B_2$, and $n\notin B_1,B_2$ we can consider $S_{i,2}((A_1,B_1))$ and $S_{j,2}((A_2,B_2))$ instead of $(A_1,B_1)$ and $(A_2,B_2)$. As $i,j\geq 2$ we have that $S_{i,2}((A_1,B_1))$ and $S_{j,2}((A_2,B_2))$ are in $\mathcal{F}$, since $\mathcal{F}$ is left-shifted.\\
This implies that we just have to show that we can assume w.l.o.g. that $n\notin B_1,B_2$ and $2\in B_1,B_2$. 

First, we show that we can assume $n\notin B_1,B_2$. 

Assume that $n\notin B_1$ but $n\in B_2$. 
As $|B_2|=n-3$ and $1\notin B_2$ there are two integers $i$ and $j$  not in $B_2$ such that $i,j>1$. For these integers $i$ and $j$ we have $S_{n,i}(B_2)\neq S_{n,j}(B_2)$, so w.l.o.g. $S_{n,i}(B_2)\neq B_1$.
As $n>i$ we have $S_{n,i}((A_2,B_2))\in \mathcal{F}$, since $(A_2,B_2)\in\mathcal{F}$ and $\mathcal{F}$ is left-shifted.
So in this case we consider $(A_1,B_1)$ and $S_{n,i}((A_2,B_2))$, instead of $(A_1,B_1), (A_2,B_2)$. 

Assume that $n\in B_1,B_2$. As $|B_1|=n-3$ and $1\notin B_1$ there is an integer $k>1$ that is not in $B_1$ and we consider $S_{n,k}(B_1)$. As $|B_2|=n-3$ and $1\notin B_2$ there are two integers $i$ and $j$  not in $B_2$ such that $i,j>1$. For these integers $i$ and $j$ we have $S_{n,i}(B_2)\neq S_{n,j}(B_2)$, so w.l.o.g. $S_{n,i}(B_2)\neq S_{n,k}(B_1)$. As $n>i,k$ we have $S_{n,k}((A_1,B_1)),S_{n,i}((A_2,B_2))\in \mathcal{F}$, since $(A_1,B_1),(A_2,B_2)\in\mathcal{F}$ and $\mathcal{F}$ is left-shifted.
So in this case we consider $S_{n,k}((A_1,B_1))$ and $S_{n,i}((A_2,B_2))$, instead of $(A_1,B_1), (A_2,B_2)$. 

Now, we show that we can also assume $2\in B_1,B_2$.

Assume that $2\in B_1$, but $2\notin B_2$.
For $i,j\in B_2$ with $i\neq j$, we have that $S_{i,2}(B_2)\neq S_{j,2}(B_2)$, so without loss of generality $S_{i,2}(B_2)\neq B_1$. 
As $i>2$ and $\mathcal{F}$ is left-shifted, we have $S_{i,2}((A_2,B_2))\in \mathcal{F}$.
So in this case we consider $(A_1,B_1)$ and $S_{i,2}((A_2,B_2))$, instead of $(A_1,B_1), (A_2,B_2)$.

Assume that $2\notin B_1,B_2$.
For some $k\in B_1$ we consider $S_{k,2}(B_1)$, so $2\in S_{k,2}(B_1)$.
For $i,j\in B_2$ with $i\neq j$, we have that $S_{i,2}(B_2)\neq S_{j,2}(B_2)$, so w.l.o.g. $S_{i,2}(B_2)\neq S_{k,2}(B_1)$.
So in this case we consider $S_{k,2}((A_1,B_1))$ and $S_{i,2}((A_2,B_2))$, instead of $(A_1,B_1), (A_2,B_2)$. 
\end{proof}

The following lemma is the key to enable an inductive proof of Theorem \ref{T: main theorem}.


\begin{lemma} \label{L: key for the proof}
	Let $n\geq 11$ and let $\EKR$ be a maximum left-shifted independent set of $\Gamma(n,1,n-3)$. 
Then $\{1\}$ has $\EKR$-weight ${n-1 \choose 3}$. 
\end{lemma}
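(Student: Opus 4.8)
The plan is to argue by contradiction: suppose $\{1\}$ has $\EKR$-weight strictly less than ${n-1\choose 3}$. By Lemma~\ref{L: weight-A} this forces the weight of $\{1\}$ to be at most ${n-1\choose 3}-{n-4\choose 2}$, and moreover there is a flag $(A',B')\in\EKR$ with $1\notin B'$. I would split $\EKR$ into $\EKR_1 = \{(A,B)\in\EKR : 1\in B\}$ and $\EKR_2 = \{(A,B)\in\EKR : 1\notin B\}$. The key tension is this: because $\EKR$ is left-shifted and maximum, if $\EKR_2$ is small then $\{1\}$ should have nearly full weight, while if $\EKR_2$ is large then Lemma~\ref{L:ABproperties} kicks in and produces two flags $(A_1,B_1),(A_2,B_2)\in\EKR_2$ with $B_1\neq B_2$, $n\notin B_1\cup B_2$ and $A_1=A_2=\{2\}$; one then hopes that $|[n]\setminus(B_1\cup B_2)|=2$ can be arranged (since $n$ is not in either, and each $B_i$ misses exactly $1$ plus two others, shifting towards a common missed pair should be possible), so that Lemmata~\ref{L: technical weight} or~\ref{L: technical weight 2} apply to bound the weights of elements outside $B_1\cup B_2$ — in particular of $\{n\}$, or more precisely of some singleton $\{c\}$ with $c\notin B_1\cup B_2$.

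Concretely, I would set up a counting/weighting argument: $2|\EKR| = \sum_{A} \mathrm{wt}(A) + \sum_{B}\mathrm{wt}(B)$ is not quite the right identity, so instead I would bound $|\EKR|$ from above by summing weights of the $(n-3)$-sets $B$, or of the singletons $A$, and compare against the target ${n\choose 4}+42$ (or against $|\EKR_0(n,1,n-3)|$ and the examples of Section~\ref{S: examples}). The strategy: if $\{1\}$ has small weight, I use that $\EKR$ is left-shifted to propagate the ``deficiency'' — for a left-shifted set, $\{1\}$ having the largest weight among singletons (this should follow from Lemma~\ref{L: left-shift}, since $S_{i,1}$ maps flags containing $\{i\}$ to flags containing $\{1\}$ and stays inside $\EKR$), so \emph{every} singleton then has weight at most ${n-1\choose 3}-{n-4\choose 2}$. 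Summing, $\sum_A \mathrm{wt}(A) = \sum_{(A,B)\in\EKR} 1 = |\EKR|$ counted with multiplicity one, which gives $|\EKR|\le n\big({n-1\choose 3}-{n-4\choose 2}\big)$; for $n\ge 11$ one checks this is smaller than ${n\choose 4}+42 \le \alpha$, contradicting maximality. I expect the arithmetic here to be the routine part and would defer it.

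The delicate part is ensuring $\{1\}$ really does carry the maximum singleton-weight and that the ``small $\EKR_2$'' regime is handled. If $\EKR_2$ is small but nonempty, the bound on $\mathrm{wt}(\{1\})$ from Lemma~\ref{L: weight-A} is only ${n-1\choose 3}-{n-4\choose 2}$, which by itself need not contradict maximality — so I would need the sharper input from Lemma~\ref{L:ABproperties} combined with Lemma~\ref{L: technical weight}/\ref{L: technical weight 2} to depress the weight of several singletons simultaneously, then sum. The main obstacle, I anticipate, is the bookkeeping in the ``large $\EKR_2$'' case: getting from the two flags supplied by Lemma~\ref{L:ABproperties} to a configuration where $|[n]\setminus(B_1\cup B_2)|=2$ exactly (rather than $\ge 2$), using only left-shifts that keep us inside $\EKR$, and then correctly combining the weight bounds for the (at least two) singletons lying outside $B_1\cup B_2$ with the trivial bound ${n-1\choose 3}$ for $\{1\}$ and $\{2\}$ to beat ${n\choose 4}+42$. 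I would organize this as: (1) reduce to left-shifted and assume $\mathrm{wt}(\{1\})<{n-1\choose 3}$; (2) dichotomy on $|\EKR_2|$; (3) in the large case invoke Lemma~\ref{L:ABproperties}, arrange the exact-$2$ complement, apply the technical lemmata to the outside singletons; (4) in the small case use the left-shift monotonicity of singleton weights; (5) in both cases sum weights of singletons and derive $|\EKR|<{n\choose 4}+42$, contradicting that $\EKR$ is maximum for $n\ge 11$ where the examples of Section~\ref{S: examples} already achieve that size.
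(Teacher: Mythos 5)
There is a genuine gap, on both branches of your dichotomy. In the ``small $\EKR_2$'' branch your summation argument cannot work numerically: each flag contains exactly one singleton, so $\sum_A \mathrm{wt}(A)=|\EKR|$, and capping every singleton at ${n-1\choose 3}-{n-4\choose 2}$ only gives $|\EKR|\le n{n-1\choose 3}-n{n-4\choose 2}=4{n\choose 4}-n{n-4\choose 2}$. This exceeds ${n\choose 4}+42$ by roughly $3{n\choose 4}$ (for $n=11$ the bound is $4\cdot330-11\cdot21=1089$ versus the target $372$), so no contradiction follows; a per-singleton deficiency of order $n^2$ can never offset a gap of order $n^4$. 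The paper has no such dichotomy at all: it defines $\EKR_1$ as the set of flags $(\{1\},B)$ \emph{missing} from $\EKR$ and uses an exchange argument --- if $|\EKR_2|<|\EKR_1|$ then $(\EKR\setminus\EKR_2)\cup\EKR_1$ is a larger independent set, contradicting maximality --- to get $|\EKR_2|\ge|\EKR_1|\ge{n-4\choose 2}$, so $\EKR_2$ is automatically large enough for Lemma \ref{L:ABproperties}. Also, the obstacles you single out in the ``large'' branch are not obstacles: the flags from Lemma \ref{L:ABproperties} lie in $\EKR_2$, so $1\notin B_1\cup B_2$, and together with $n\notin B_1\cup B_2$ a third missing element would force $B_1=B_2$; hence $|[n]\setminus(B_1\cup B_2)|=2$ is automatic, and the singleton to which Lemma \ref{L: technical weight} is applied is $\{1\}$ itself (not $\{n\}$ or some other $c$), yielding the lower bound $|\EKR_1|\ge(n-5)(n-4)-{n-5\choose 2}$.

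The second, and decisive, missing ingredient is the structural claim that no two flags $(A,B),(A',B')\in\EKR_2$ can satisfy $B\cup B'=[n]\setminus\{1\}$; the paper proves this by exhibiting explicit left-shifts of such flags that would be opposite and still lie in $\EKR$. Consequently the complements of the sets $B$ inside $[n]\setminus\{1\}$ are pairwise intersecting $2$-sets, and the classical Erd\H{o}s-Ko-Rado theorem caps the number of distinct $B$'s occurring in $\EKR_2$ at $n-2$. The proof then closes by a three-way case distinction on how many of these $B$'s have maximal weight $n-3$ (none, exactly one, at least two), playing upper bounds on $|\EKR_2|$ (via Lemma \ref{L: weight B gap} and the $n-2$ cap) against lower bounds on $|\EKR_1|$ (via Lemmata \ref{L: technical weight} and \ref{L: technical weight 2}) to reach a contradiction for $n\ge 11$. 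Your outline contains neither the exchange inequality $|\EKR_2|\ge|\EKR_1|$, nor the $B\cup B'\ne[n]\setminus\{1\}$ claim with its EKR consequence, nor the weight case analysis on the $B$-side, and without these the ``large $\EKR_2$'' branch does not close either.
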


Before we start with the proof, we remark that $n\geq 11$ is a necessary condition in order for the statement to be true. For $n=9,10$ we can find maximum left-shifted independent set of size ${n\choose 4}+42$, where $\{1\}$ does not have maximal weight, see Remark \ref{R: weight of examples} and Remark \ref{E: example s=3}.

\begin{proof}
 Let us assume that the $\EKR$-weight of $\{1\}$ is smaller than ${n-1 \choose 3}$. Lemma \ref{L: weight-A} states that the weight of $\{1\}$ is ${n-1 \choose 3}$ if and only if all flags $(A,B)$ of $\EKR$ satisfy $1\in B$. So in particular $\EKR$ contains flags $(A,B)$ with $1\notin B$.
 
  Let $\EKR_1$ be the set of flags $(A,B)$ with $A=\{1\}$ that are not in $\EKR$. Furthermore, let $\EKR_2$ be the set of flags $(A,B)$ in $\EKR$ with $1\notin B$. 
 If $|\EKR_2|<|\EKR_1|$ we would have that $(\EKR \setminus \EKR_2)\cup \EKR_1$ is an independent set of size bigger than $|\EKR|$. This stands in contradiction to $|\EKR|$ being a maximum independent set. Therefore, we have 
 \begin{align} \label{A: basic}
 	|\EKR_2|\geq |\EKR_1|.
 \end{align}
 
 Lemma \ref{L: weight-A} yields $|\EKR_1|\geq {n-4\choose 2}$. As $|\EKR_2|\geq |F_1|\geq {n-4\choose 2}$, we have $|\EKR_2|> (n-3)$ for $n\geq 8$.
 Lemma \ref{L:ABproperties} implies that $\EKR_2$ contains flags $(A_1,B_1)$, $(A_2,B_2)$ with $B_1\neq B_2$, as well as $A_1=A_2=\{2\}$ and $n\notin B_1\cup B_2$.
 As the flags are in $\EKR_2$, we have $1\notin B_1\cup B_2$. If there would be a third integer not in $B_1\cup B_2$, we would have $B_1=B_2$. Therefore, $|[n]\setminus (B_1\cup B_2)|=2$ and we can apply Lemma \ref{L: technical weight} and obtain
 \begin{align} \label{A: EKR_1^c ineq 1}
 	|\EKR_1|\geq (n-5)(n-4)-{n-5\choose 2}.
 \end{align}

Now, assume that $(A,B)$ and $(A',B')$ are flags in $\EKR_2$ such that $B\cup B'=[n]\setminus \{1\}$. We show that this leads to a contradiction by showing that there are left shifts of $(A,B)$ and $(A',B')$ in $\EKR$ that are opposite. 
Let $A=\{i\}$ and $A'=\{j\}$. 

If $j\in B$ and $i\notin B'$, we can consider $(A'',B'')=S_{j,1}((A',B'))$. Then $A''=\{1\}\not\subseteq B$ and $A=\{i\}\not\subseteq B''$. Furthermore $i,j\in B$ and $1\in B''$, so $B\cup B''=[n]$. Therefore, $(A,B)$ and $(A'',B'')$ are opposite.
If $i\in B'$ and $j\notin B$ we can use an analogue argument.

If $j\in B$ and $i\in B'$, we can assume w.l.o.g. that $i\geq j$. Then we can consider  $(A'',B'')=S_{i,j}(S_{j,1}((A',B')))$. Then $i\notin B''$ and $1\notin B$. As $i,j\in B$ and $1\in B''$, we have $B\cup B''=[n]$. Therefore, $(A,B)$ and $(A'',B'')$ are opposite.

If $j\notin B$ and $i\notin B'$, we can assume w.l.o.g. that $i> j$. As $|B\cup B'|=n-1$ and $|B|=|B'|=n-3$, we have $|B\cap B'|=n-5$. As $n\geq 11$ there is a $k\in B\cap B'$ and $k\neq i,j$. Now, we can consider  $(A'',B'')=S_{k,1}((A',B'))$. Then $A''=\{j\}\not\subseteq B$ and  $A=\{i\}\not\subseteq B''$. Furthermore, $i\in B$ and $1,j\in B''$, so $B\cup B''=[n]$. Hence, $(A,B)$ and $(A'',B'')$ are opposite.

We have shown that no two flags $(A,B)$ and $(A',B')$ in $\EKR_2$ can satisfy $B\cup B'=[n]\setminus \{1\}$. 
If $(A,B)\in \EKR_2$, the complement $B^c$ of $B$ in $[n]\setminus \{1\}$ is a subset of size $2$. The condition $B\cup B'\neq [n]\setminus \{1\}$ translates to $B^c\cap B'^c \neq \emptyset$. By the Erd\H{o}s-Ko-Rado theorem we get at most ${(n-1)-1 \choose 2-1}=n-2$ distinct subsets $B$ of size $n-3$ in flags of $\EKR_2$.
Let $w$ be the average weight of such a set $B$ in $\EKR_2$.
	 Using (\ref{A: basic}) and (\ref{A: EKR_1^c ineq 1})  this yields
	\begin{align*} 
		w(n-2)\geq  |\EKR_2|\geq |\EKR_1| \geq (n-5)(n-4)-{n-5\choose 2}.
	\end{align*}
So in particular $w\geq  \frac{(n-5)(n-4)-(n-6)(n-5)/2}{n-2}=\frac{n-5}{2}$.

First, assume that no $(n-3)$-set of a flag in $\EKR_2$ has maximal weight. 
In this case Lemma \ref{L: weight B gap} implies that the sets $B$ of flags in $\EKR_2$ have weight at most $n-6$.
We also have that there is a flag $(A',B')$ in $\EKR_2$, for which $B'$ has weight $w\geq \frac{n-5}{2}$.
We count how many flags $(A,B)$ with $A=\{1\}$ are opposite to $(A',B')$. Let $W\subseteq B'$ with $|W|=w$ such that all flags $(A',B')$ with $|A'|=1$ and $A'\subseteq W$ are in $\EKR_2$. 
The flags $(A,B)$ with $A=\{1\}$ that are opposite to at least one flag containing $B'$ are the flags that satisfy $B\cup B'=[n]$ and $W\not\subseteq B$. 
First, we count how many flags satisfy $B\cup B'=[n]$.
We have $([n]\setminus B')\subseteq B$, so $3$ elements of $B$ are fixed and there are $n-3-3$ left to choose out of $n-3$, so the number in question is ${n-3\choose 3}$. Now, we count how many of those satisfy $W\subseteq B$. In this case $w+3$ elements of $B$ are fixed and there are $(n-3)-(w+3)$ left to choose out of $n-(w+3)$, so the number in question is ${n-w-3 \choose 3}$.
As $w\geq \frac{n-5}{2}$, we have ${n-3\choose 3}-{n-w-3 \choose 3}\geq {n-3\choose 3}-{\frac{n-1}{2}\choose 3}$. In view of (\ref{A: basic}), this gives
$$ (n-2)(n-6) \geq  |\EKR_2|\geq |\EKR_1| \geq  {n-3\choose 3}-{\frac{n-1}{2}\choose 3} $$
and this is a contradiction for $n\geq 11$.

Now, assume that there is exactly one set $B$ in a flag of $\EKR_2$ that has maximal weight.
We can apply Lemma \ref{L: technical weight 2} (a) which yields $|\EKR_1|\geq {n-3\choose 3}$. 
On the other hand Lemma \ref{L: weight B gap} implies that the remaining sets $B$ of flags in $\EKR_2$ (there are at most $n-3$ such sets) have weight at most $n-6$. Using (\ref{A: basic}), this yields
$$ (n-3)+(n-3)(n-6) \geq  |\EKR_2|\geq |\EKR_1| \geq  {n-3\choose 3} $$
and this is a contradiction for $n\geq 11$.

Finally, assume that there are at least two $(n-3)$-sets $B_1$ and $B_2$ in  flag of $\EKR_2$ that have maximal weight. 
We have already seen that it is not possible that $B_1\cup B_2=[n]\setminus \{1\}$. Hence, $|[n]\setminus (B_1\cup B_2) |= 2$.
So, we can apply Lemma \ref{L: technical weight 2} (b) which implies $|\EKR_1|\geq 2{n-3\choose 3}-{n-4\choose 3}$. Using (\ref{A: basic}), this yields
$$ (n-2)(n-3) \geq  |\EKR_2|\geq |\EKR_1| \geq   2{n-3\choose 3}-{n-4\choose 3} $$
and this is a contradiction for $n\geq 11$.
\end{proof}

\begin{cor}
For $n\geq 11$, we have $\alpha(\Gamma(n,1,n-3))\leq {n\choose 4}+42$.
\end{cor}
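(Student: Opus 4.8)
The plan is an induction on $n$, with Lemma~\ref{L: key for the proof} driving the inductive step and the values $\alpha(\Gamma(9,1,6))=168=\binom{9}{4}+42$ and $\alpha(\Gamma(10,1,7))=252=\binom{10}{4}+42$ from Table~\ref{independence number} serving as the base cases. So fix $n\geq 11$ and assume the bound has already been established for $n-1$; this is legitimate, since $n-1\geq 10$, the value for $n-1=10$ being supplied by Table~\ref{independence number} and the values for $n-1\geq 11$ by the inductive hypothesis.

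First I would reduce to the left-shifted situation. Let $\EKR$ be a maximum independent set of $\Gamma(n,1,n-3)$; applying a suitable finite sequence of left-shifts (see the Shifting subsection) produces a left-shifted independent set of the same cardinality, so we may assume that $\EKR$ itself is a maximum left-shifted independent set. Since $n\geq 11$, Lemma~\ref{L: key for the proof} then tells us that $\{1\}$ has $\EKR$-weight $\binom{n-1}{3}$, and Lemma~\ref{L: weight-A} forces $1\in B$ for \emph{every} flag $(A,B)\in\EKR$.

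Next I would split $\EKR$ according to whether $A=\{1\}$ or not. The flags of $\EKR$ with $A=\{1\}$ number exactly $\binom{n-1}{3}$, by the definition of the $\EKR$-weight of $\{1\}$ together with Lemma~\ref{L: key for the proof}. For a flag $(A,B)\in\EKR$ with $A\neq\{1\}$ we have $1\in B\setminus A$, so $(A,B\setminus\{1\})$ is a flag of type $\{1,n-4\}$ of the $(n-1)$-element set $[n]\setminus\{1\}$, i.e.\ a vertex of a copy of $\Gamma(n-1,1,n-4)=\Gamma(n-1,1,(n-1)-3)$, and the map $(A,B)\mapsto(A,B\setminus\{1\})$ is clearly injective on this part of $\EKR$. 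The step that needs genuine care is showing that the image of this map is an independent set of $\Gamma(n-1,1,n-4)$: if $(A_1,B_1\setminus\{1\})$ and $(A_2,B_2\setminus\{1\})$ were opposite there, then by Lemma~\ref{L: opposition} we would have $(B_1\setminus\{1\})\cup(B_2\setminus\{1\})=[n]\setminus\{1\}$, $A_1\cap(B_2\setminus\{1\})=\emptyset$ and $A_2\cap(B_1\setminus\{1\})=\emptyset$; re-adjoining $1$ and using $1\notin A_1,A_2$, these become $B_1\cup B_2=[n]$, $A_1\cap B_2=\emptyset$ and $A_2\cap B_1=\emptyset$, so that $(A_1,B_1)$ and $(A_2,B_2)$ are opposite in $\Gamma(n,1,n-3)$, contradicting the independence of $\EKR$. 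Hence the number of flags of $\EKR$ with $A\neq\{1\}$ is at most $\alpha(\Gamma(n-1,1,n-4))$.

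Putting the two parts together, $|\EKR|\leq \binom{n-1}{3}+\alpha(\Gamma(n-1,1,n-4))\leq \binom{n-1}{3}+\binom{n-1}{4}+42=\binom{n}{4}+42$ by the inductive hypothesis and Pascal's identity, which is the claim. The main obstacle is the independence check of the preceding paragraph; everything else (the count $\binom{n-1}{3}$, the injectivity of the projection, the reduction to the left-shifted case, Pascal's identity) is routine. One should also record at the outset that the standing hypotheses $a+b<n$ and $a<n/2<b$ persist for the parameters $(n-1,1,n-4)$ once $n\geq 11$, so that Lemma~\ref{L: opposition} is indeed available for the smaller graph.
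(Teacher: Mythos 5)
Your proposal is correct and follows essentially the same route as the paper: reduce to a maximum left-shifted set, apply Lemma~\ref{L: key for the proof} and Lemma~\ref{L: weight-A} to force $1\in B$ for every flag, split off the $\binom{n-1}{3}$ flags with $A=\{1\}$, project the rest via $(A,B)\mapsto(A,B\setminus\{1\})$ into $\Gamma(n-1,1,n-4)$, and induct with the computational value for $n=10$ as base. The only difference is that you spell out the injectivity and independence of the projection, which the paper leaves implicit.
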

\begin{proof}
Let $\EKR$ be a maximum left-shifted independent set of $\Gamma(n,1,n-3)$. 
By Lemma \ref{L: key for the proof}, the subset $\{1\}$ has weight ${n-1 \choose 3}$. Lemma \ref{L: weight-A} implies that every flag $(A,B)$ in $\EKR$ satisfies $1\in B$.
Consider the set of flags $(A,B)$ in $\EKR$ with $A\neq \{1\}$.
By considering $(A,B\setminus\{ 1\})$ for any such flag, we obtain an independent set of $\Gamma(n-1,1,n-4)$. In particular we have 
$$\alpha(\Gamma(n,1,n-3))\leq {n-1 \choose 3}+ \alpha(\Gamma(n-1,1,n-4)).$$

For $n=11$, we have $\alpha(\Gamma(n-1,1,n-4))\leq {n-1 \choose 4}+42$ by Section \ref{S: computer results}, so for general $n$ this is the induction hypothesis and we have
\begin{align*}
	\alpha(\Gamma(n,1,n-3))\leq {n-1 \choose 3}+{n-1 \choose 4}+42={n\choose 4}+42 
\end{align*}
as claimed.
\end{proof}

Together with Remark \ref{E: example s=3} and the results of Section \ref{S: computer results}, this proves Theorem \ref{T: main theorem}.

\subsection*{Acknowledgements}
  
The author would like to thank Klaus Metsch for bringing this problem to his attention.
The author would also like to thank Alexander Gavrilyuk for helpful discussions.

\bibliographystyle{plain}
\bibliography{EKR_flags_1,n-3_Heering.bib}

\end{document}